\documentclass[12pt]{amsart}
\usepackage{amsmath}
\usepackage{hyperref}
\usepackage{amssymb}
\usepackage{latexsym}
\usepackage{amscd}
\usepackage{graphicx} %We can use any other package if necessary
\usepackage{amsthm}
\usepackage{mathrsfs}
\usepackage{xypic}
\usepackage{bm}

\newdimen\AAdi%
\newbox\AAbo%
% %ou cmex10
%
\def\AAk#1#2{\s_etbox\AAbo=\hbox{#2}\AAdi=\wd\AAbo\kern#1\AAdi{}}%
\def\AAr#1#2#3{\s_etbox\AAbo=\hbox{#2}\AAdi=\ht\AAbo\raise#1\AAdi\hbox{#3}}%
%%%%%%%%%%%%%%%%%%%%%%%%%%%%%%%%%%%%%%%%%%%%%%%%%%%%%%%%%%%%%%%%%%%%%%%%%%%%%%
\font\tenmsb=msbm10 at 12pt \font\sevenmsb=msbm7 at 8pt
\font\fivemsb=msbm5 at 6pt
\newfam\msbfam
\textfont\msbfam=\tenmsb \scriptfont\msbfam=\sevenmsb
\scriptscriptfont\msbfam=\fivemsb
\def\Bbb#1{{\tenmsb\fam\msbfam#1}}
%%%%%%%%%%%%%%%%%%%%%%%%%%%%%%%%%%%%%%%%%%%%%%%%%%%%%%%%%%%%%%%%%%%%%%%%%%%%%%
\textwidth 15cm \textheight 22.8cm \topmargin 0cm \oddsidemargin
0.5cm \evensidemargin 0.5cm
\parindent = 5 mm
\hfuzz     = 6 pt
\parskip   = 3 mm

\newtheorem{thm}{Theorem}[section]
\newtheorem{lem}{Lemma}[section]

\newtheorem{pro}{Proposition}[section]

\newcommand{\ba}{\begin{array}}
\newcommand{\ea}{\end{array}}

\newcommand{\Section}[2]{\setcounter{equation}{0}
\allowdisplaybreaks
\section[#1]{#2}}

\def\n{\nabla}

\def\f#1#2{\frac{#1}{#2}}

\def\grs#1#2{\bold G_{#1,#2}}

\def\mc#1{\mathcal{#1}}

\def\a{\alpha}
\def\be{\beta}

\def\p#1{\partial #1}

\def\de{\delta}
\def\De{\Delta}

\def\ep{\varepsilon}

\def\G{\Gamma}
\def\g{\gamma}

\def\la{\lambda}
\def\La{\Lambda}
\def\om{\omega}
\def\Om{\Omega}
\def\th{\theta}

\def\si{\sigma}

\def\w{\wedge}

\def\R{\Bbb{R}}

\def\lan{\langle}
\def\ran{\rangle}
\def\ra{\rightarrow}

\def\ol{\overline}
\def\mb{\mathbf}

\def\Id{\mathbf{Id}}
\def\Arg{\text{Arg}}

\subjclass{58E20,53A10.}

\begin{document}
\title
[Spherical Bernstein theorem] {A spherical Bernstein theorem for
minimal submanifolds of higher codimension}

\author
[J. Jost, Y. L. Xin and Ling Yang]{J. Jost, Y. L. Xin and Ling Yang}
\address{Max Planck Institute for Mathematics in the
Sciences, Inselstr. 22, 04103 Leipzig, Germany.}
\email{jost@mis.mpg.de}
\address {Institute of Mathematics, Fudan University,
Shanghai 200433, China.} \email{ylxin@fudan.edu.cn}
\address{Institute of Mathematics, Fudan University,
Shanghai 200433, China.} \email{yanglingfd@fudan.edu.cn}
\thanks{The first author is supported by the ERC Advanced Grant
  FP7-267087. The second named author and the third named author are partially supported by
  NSFC. They are grateful to the Max Planck
Institute for Mathematics in the Sciences in Leipzig for its
hospitality and  continuous support. }

\begin{abstract}
Combining the tools of geometric analysis with  properties of Jordan angles and angle space distributions,
we derive a spherical  and a Euclidean Bernstein theorem for
minimal submanifolds of arbitrary dimension and codimension, under the condition that
the Gauss image is contained in some geometrically defined  closed region of a Grassmannian manifold. The proof depends
on the subharmoncity of an auxiliary function, the Codazzi equations
and  geometric measure theory.

\end{abstract}
\maketitle

\Section{Introduction}{Introduction}

This paper is a part of our systematic approach to the Bernstein problem in higher codimension. The Bernstein problem has a spherical and a Euclidean version, and the two are tightly related and essentially equivalent, as is well known and as we shall explain in a moment in more detail.

The Euclidean version says that a complete $n$-dimensional minimal submanifold $M$ of $\R^{n+m}$, that is, of codimension $m$, has to be an affine subspace if its Gauss image is contained in a sufficiently small subset of the Grassmann manifold $\grs{m}{n}$. Equivalently, it is affine when all of its normal spaces $N$ satisfies $\lan N,Q_0\ran >c_0$ 
 for a fixed reference space $Q_0$ and some positive constant
 $c_0$. In either formulation, we are assuming that the tangent, or
 equivalently, the normal spaces do not change their direction too
 much when we move across $M$. That some such condition is necessary
 follows from an example of Lawson-Osserman\cite{l-o} with $\lan
 N,Q_0\ran=1/9$. And that example tells us, more precisely, that the
 condition has to be stricter for $m>1$ than in the codimension 1 case, the setting of the classical Bernstein theorem \cite{be} and its extensions by Fleming \cite{f}, de Giorgi \cite{g}, Almgren
\cite{al}, Simons
\cite{Si}, Moser \cite{m}, and others.

The spherical Bernstein theorem concerns compact $(n-1)$-dimensional minimal submanifolds of the sphere $S^{n+m-1}$, and analogously, the aim is to prove that they are totally geodesic (i.e. equatorial) subspheres when their normal planes do not change their directions too much.

As indicated, we are interested here in the case $m>1$, and we ask what the optimal quantitative condition is. In previous work, we have shown
\begin{thm}\cite{j-x-y2}\label{t1}
Let $M$ be an $(n-1)$-dimensional compact minimal submanifold in $S^{n+m-1}$. Suppose that there is a fixed
oriented $m$-plane $Q_0$ and a number $c_0>1/3$, such that $\lan N,Q_0\ran\geq c_0$ holds for all
normal $m$-planes $N$ of $M$. Then $M$ is totally geodesic.
\end{thm}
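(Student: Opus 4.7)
The plan is to pass from the sphere to Euclidean space by taking the cone, and then to exploit the harmonicity of the Gauss map of the resulting minimal cone together with an appropriately convex auxiliary function on the Grassmannian. Form
\[
CM := \{tx : t \geq 0,\ x \in M\} \subset \R^{n+m}.
\]
Since $M$ is a minimal $(n-1)$-submanifold of $S^{n+m-1}$, the cone $CM$ is an $n$-dimensional minimal submanifold of $\R^{n+m}$, smooth away from the origin. At a regular point $tx\in CM$, the tangent space is $\R x \oplus T_x M$, so the normal $m$-plane of $CM$ in $\R^{n+m}$ at $tx$ coincides with the normal $m$-plane of $M$ in $S^{n+m-1}$ at $x$. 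Consequently the Gauss image of the regular part of $CM$ agrees with that of $M$, and by hypothesis lies inside
\[
U_{c_0} := \{P \in \grs{m}{n} : \lan P, Q_0\ran \geq c_0\}.
\]

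The analytic core is the construction of an auxiliary function $w \in C^2(U_{c_0})$ which is strictly geodesically convex, in a quantitative sense, along every geodesic of $\grs{m}{n}$ remaining in $U_{c_0}$. Near $Q_0$ one parametrises $\grs{m}{n}$ by the Jordan angles $\theta_1,\ldots,\theta_m$ of $P$ relative to $Q_0$, so that $\lan P, Q_0\ran = \prod_i \cos\theta_i$; a natural candidate is an explicit function of $\prod_i \sec\theta_i$. The condition $c_0 > 1/3$ is exactly what guarantees that such a $w$ has positive-definite Hessian throughout $U_{c_0}$, a conclusion which rests on a careful analysis of the angle space distribution on the Grassmannian. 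By the Ruh--Vilms theorem the Gauss map $\gamma$ of the minimal cone $CM$ is harmonic, so the composition formula gives
\[
\Delta_{CM}(w\circ\gamma) = \mathrm{Hess}\,w\,(\gamma_* e_i, \gamma_* e_i)
\]
for any local orthonormal frame $\{e_i\}$ on $CM$. The Codazzi equations, together with the structure equations of the moving frame, express $d\gamma$ in terms of the second fundamental form $B$ of $CM$, so strict convexity of $w$ translates into a pointwise bound $\Delta_{CM}(w\circ\gamma) \geq c\,|B|^2$ for some constant $c = c(c_0) > 0$.

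Since $w\circ\gamma$ is invariant along the radial rays of $CM$, it descends to a smooth function $u$ on the compact manifold $M$, and the warped product structure $CM = \R^+ \times_t M$ converts the previous inequality into $\Delta_M u \geq c'\,|B_M|^2$ on $M$. Integrating over the closed manifold $M$ forces $\int_M |B_M|^2 = 0$, so $B_M \equiv 0$ and $M$ is totally geodesic, as claimed. The handling of the apex of $CM$, and more generally the justification that the analysis on $CM$ transfers cleanly to $M$, is where geometric measure theory enters, through the standard machinery of minimal cones with isolated singularities and lower-semicontinuity of mass.

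The main obstacle is the construction of the convex function $w$ on the region $U_{c_0}$: one has to verify positive definiteness of its Hessian along every geodesic of $\grs{m}{n}$ that stays in $U_{c_0}$, which is a purely Grassmannian problem about symmetric functions of $\cos\theta_i$ along such geodesics. The value $c_0 = 1/3$ is precisely the sharp threshold at which such a function ceases to exist in the required generality, and identifying this threshold is the heart of the argument.
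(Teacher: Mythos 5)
Your overall skeleton — cone over $M$, Gauss map of the minimal cone, an auxiliary function of $\prod\sec\theta_i$ whose Laplacian controls $|B|^2$, descend to $M$, integrate (or use the maximum principle) — is indeed the right one, and the algebraic observations about the normal space of $CM$ being that of $M$ and about the cone-like invariance of $w\circ\gamma$ are correct.

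However, there is a genuine gap in the central analytic step. You want to produce a function $w$ on $U_{c_0}=\{P:\langle P,Q_0\rangle\ge c_0\}$ that is strictly geodesically convex on all of $U_{c_0}$, and then invoke Ruh--Vilms plus the composition formula
$\Delta_{CM}(w\circ\gamma)=\mathrm{Hess}\,w(\gamma_*e_i,\gamma_*e_i)$
with a harmonic $\gamma$. That would make $w\circ\gamma$ subharmonic for \emph{any} harmonic map into $U_{c_0}$, not just for Gauss maps of minimal cones. But the region $\{w>1/3\}$ is too large for that: neither $w^{-1}$ nor any known substitute has a positive-definite Hessian on all of it, and the sharp constant one can reach by pure geodesic convexity of an auxiliary function on the Grassmannian is strictly smaller than $1/3$. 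What actually makes the bound $c_0>1/3$ attainable is that the vectors $\gamma_*e_i$ are not arbitrary tangent vectors to $\grs{m}{n}$: they are built from a second fundamental form, hence symmetric in its lower indices and trace-free by minimality, and by the Codazzi equations one has an explicit formula for $\Delta w$ on the submanifold. The positivity statement one must verify is the weaker one that the resulting quadratic form in the components $h_{\alpha,ij}$ (subject to $h_{\alpha,ij}=h_{\alpha,ji}$ and $\sum_i h_{\alpha,ii}=0$) is positive semi-definite with a quantitative gain when $v=w^{-1}<3$. This is exactly what the grouping into the terms $I_i$, $II_{i\alpha\beta}$, $III_{\alpha\beta\gamma}$, $IV_\alpha$ and Lemmas \ref{l2}--\ref{IV} accomplish. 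So the obstacle you flag at the end — ``construct the convex function $w$'' — is not just the heart of the argument but a step that, as you phrase it (convexity along \emph{every} geodesic in $U_{c_0}$), almost certainly fails; the proof must instead exploit the algebraic constraints on the second fundamental form that the composition-formula route throws away. With that replacement, the rest of your outline (cone-like descent to $M$, then maximum principle or integration over the closed $M$ to kill $|B|^2$) goes through as you describe.
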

Our question here is whether this is optimal, that is, whether there exists a counterexample for $c_0=1/3$, or whether one can move beyond. In this paper, we show that the result continues to hold for $c_0=1/3$, that is,
\begin{thm}\label{t2}
Let $M$ be an $(n-1)$-dimensional compact minimal submanifold in $S^{n+m-1}$. Suppose that there is a fixed
oriented $m$-plane $Q_0$, such that $\lan N,Q_0\ran\geq 1/3$ holds for all
normal $m$-planes $N$ of $M$. Then $M$ is totally geodesic.
\end{thm}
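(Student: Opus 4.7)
The plan is to convert the spherical problem to a Euclidean one by the cone construction, and then combine the subharmonicity of a suitable auxiliary function on the Grassmannian with a tangent--cone argument from geometric measure theory. First, I would form the cone $CM \subset \R^{n+m}$ over $M$ with vertex at the origin. Then $CM\setminus\{0\}$ is a smooth $n$-dimensional minimal submanifold whose Gauss image coincides with that of $M$, so the hypothesis $\lan N, Q_0\ran \geq 1/3$ persists on $CM$; proving that $CM$ is a flat $n$-plane will immediately yield that $M$ is an equator. To control the Gauss map $\gamma$, I would employ an auxiliary function $v$ on the open region $\mc{V} = \{P \in \grs{m}{n} : \lan P, Q_0\ran > 1/3\}$, defined in terms of the Jordan angles between $P$ and $Q_0$, which is strictly convex along geodesics in $\mc{V}$ and diverges on $\partial \mc{V}$. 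The composition formula for the Gauss map of a minimal submanifold then gives $\Delta(v\circ\gamma)\geq 0$ on the regular part of $CM$, with strict inequality away from very degenerate configurations; this is essentially the ingredient that delivered Theorem~\ref{t1} in \cite{j-x-y2}.

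For the endpoint case $c_0 = 1/3$, the Gauss image of $CM$ is permitted to approach $\partial \mc{V}$, where $v$ becomes infinite, so a direct maximum--principle argument on $v\circ\gamma$ breaks down. I would therefore argue by contradiction: assuming that $CM$ is not a plane, a blow--down at infinity (using the scale invariance of the cone) combined with the compactness of integer rectifiable minimal varifolds of locally bounded density produces a tangent cone $C_\infty$ that again satisfies $\lan N, Q_0\ran \geq 1/3$. At any regular point of $C_\infty$ with Gauss image in $\mc{V}$, the subharmonicity of $v\circ\gamma$ together with an Allard--type regularity estimate yields uniform control on the second fundamental form, so the nontrivial geometry of $C_\infty$ must be concentrated on the set where the Gauss image touches $\partial \mc{V}$.

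The main obstacle, and the genuine technical content of the theorem, is the rigidity analysis on this boundary: one needs to rule out the existence of a non--planar minimal cone whose Gauss image sits on $\{P : \lan P, Q_0\ran = 1/3\}$. This threshold is sharp in the sense that it is the critical value determined by the Jordan angle distribution on $\partial \mc{V}$, and it is precisely what prevents the Lawson--Osserman cone \cite{l-o} (with $\lan N, Q_0 \ran = 1/9$ after optimizing $Q_0$) from already obstructing the conclusion. I would apply the Codazzi equations at a putative point of $CM$ where $\lan \gamma, Q_0\ran = 1/3$ is attained, decompose the second fundamental form in a frame diagonalizing the Jordan angles between $\gamma$ and $Q_0$, and show that the resulting algebraic constraints on the normal curvature tensor force a splitting of $CM$ as a Riemannian product containing a flat factor. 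Iterating on the lower--dimensional minimal factor, or applying Theorem~\ref{t1} to it once the splitting strips off the degeneracy, would contradict the assumption that $CM$ is not a plane and close the argument. Establishing this boundary rigidity, rather than setting up the subharmonic framework itself, is where I expect the real work to lie.
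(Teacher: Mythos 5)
Your high-level outline captures the right broad ingredients --- cone construction, subharmonicity of an auxiliary function coming from the Jordan angle structure, and a rigidity analysis at the threshold via the Codazzi equations and an angle-space decomposition --- but several of the specific mechanisms you propose are either unnecessary or wrong for the spherical case, and the actual contradiction is of a different nature than the one you anticipate.

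First, the blow-down at infinity and the appeal to varifold compactness are vacuous here: $CM$ is already a cone, so its tangent cone at infinity is itself. That machinery is indeed used in the paper, but only for the \emph{Euclidean} Theorem~\ref{t3}, where the graph is not a cone. For Theorem~\ref{t2} one works directly on the truncated cone $CM_\ep$. Second, the auxiliary function used, $v := w^{-1}(\cdot,Q_0)\circ\g$, does \emph{not} diverge on $\partial\mc{V}$; it equals exactly $3$ there. The crucial fact, requiring the delicate estimates of Lemmas~\ref{l2}--\ref{IV}, is that $\De v \geq 0$ holds on the \emph{closed} region $\{v\leq 3\}$, not just on the open interior. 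Since $M$ is compact and $v$ is a cone-like function, the maximum principle applies directly to $v|_M$ and gives $v$ constant, hence $\De v \equiv 0$ on $CM_\ep$. No contradiction argument or blow-down is needed to reach this point.

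Third, and most substantively, the rigidity step is not a product-splitting/induction. The equality case $\De v(p)=0$ with $|B|^2(p)\neq 0$ forces (Proposition~\ref{sub-har}) a very rigid configuration: the only nonzero Jordan angle is $\th_0=\arctan\sqrt 2$ with multiplicity exactly $2$, and $B$ takes the ``simple austere'' form $B = \sum(S_{\nu_\a\nu_\be}\odot\om_\a)\otimes\nu_\be$ with $S$ a rank-one antisymmetric form supported on a single unit vector field $e_n$ lying in the \emph{trivial} tangent angle distribution $T_0 U \subset Q_0^\bot$. A theorem of Nomizu (Lemma~\ref{dis}) is invoked to guarantee these angle spaces form smooth subbundles. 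The contradiction then comes from the Codazzi equations applied to $B_{e_i e_j}\equiv 0$ ($i,j\leq n-1$): they force $\lan\n_{e_i}e_j,e_n\ran=0$ for all such $i,j$, hence $\bar\n_{e_1}e_n = B_{e_1 e_n} = h\nu_2$. But $e_n$ is a section of $Q_0^\bot$, so $\bar\n_{e_1}e_n\in Q_0^\bot$, whereas $\nu_2\in N_{\th_0}U$ has $|\mc{P}_0\nu_2|=\cos\th_0\neq 0$. This is a direct pointwise contradiction, not a splitting off of a flat factor followed by induction; your proposed route would need the iteration and product structure justified from scratch, and there is no indication it succeeds.
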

This might look like a small and insignificant step, but as in many examples of geometric analysis, limiting cases often are much harder than those involving strict inequalities. The reason is that one needs additional tools to analyze possible limit configurations before one can deduce that they can't exist after all. Typically, in such cases, the analytical estimates need to be supplemented by considerations of a more algebraic nature. That is also the case here. We shall carefully utilize the information contained in the Codazzi equations.
Still, there exists a quantitative gap between the above positive results and the counterexample of Lawson-Osserman, as the latter corresponds to the value $c_0=1/9$ instead of the $1/3$ that we can currently achieve. Nevertheless, we believe that since we can now rule out a counterexample for  $c_0=1/3$, we expect that one can go even beyond that value. Whether  $c_0=1/9$ is the largest value with a counterexample or whether a counterexample different from the Lawson-Osserman one exists for some value between $1/9$ and $1/3$, we currently do not know.

Let us now recall the relation between the spherical and the Euclidean Bernstein problem and then state our results for the latter. Fleming's idea \cite{f} was that by rescaling a  nontrivial minimal graph in Euclidean space, one obtains a nonflat minimal cone, and the intersection of that cone with the unit sphere then is a compact minimal submanifold of the latter. Therefore, conditions ruling out the latter can be translated into conditions ruling out the former. Because of the noncompact nature of minimal graphs in Euclidean space, as an important technical ingredient, one needs to invoke Allard's regularity theory. 

We then obtain the following result concerning the Euclidean version
of the higher codimensional Bernstein problem (see Theorem \ref{t3}), which is an improvement of Theorem 1.1 in \cite{j-x-y2}.

\begin{thm}\label{t3a}
Let $f:=(f^1,\cdots,f^m)$ be a smooth $\R^m$-valued function defined everywhere on $\R^n$. If its graph
$M:=\text{graph }f=\{(x,f(x)):x\in \R^m\}$ is a minimal submanifold in $\R^{n+m}$, and
\begin{equation}\label{con2}
\De_f:=\Big[\det\Big(\de_{ij}+\sum_\a \f{\p f^\a}{\p x^i}\f{\p
f^\a}{\p x^j}\Big)\Big]^{\f{1}{2}}\leq 3,
\end{equation}
then $f^1,\cdots,f^m$ have to be affine linear, that is, represent an affine $n$-plane in $\R^{n+m}$.

\end{thm}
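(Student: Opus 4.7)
The plan is to derive Theorem \ref{t3a} from the spherical version (Theorem \ref{t2}) via a blow-down construction, following Fleming's strategy combined with Allard's regularity theorem. The first step is to translate the analytic hypothesis (\ref{con2}) into the geometric Gauss-image condition of Theorem \ref{t2}. Letting $Q_0:=\{0\}\times\R^m\subset \R^{n+m}$ be the reference normal $m$-plane, a direct computation on the graph $M=\text{graph } f$ gives
\begin{equation*}
\lan N(x),Q_0\ran=\frac{1}{\De_f(x)},
\end{equation*}
so $\De_f\le 3$ is equivalent to $\lan N,Q_0\ran\ge 1/3$ uniformly along $M$. In particular, the Gauss image of $M$ lies in the closed region of $\grs{m}{n}$ treated in Theorem \ref{t2}.

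Next, I would consider the blow-down sequence $M_k:=t_k\cdot M$ for some $t_k\to 0^+$. Since $\De_f\le 3$, the area of $M\cap B_R(0)$ is bounded by $3\omega_n R^n$, so the rescaled currents $M_k$ have locally uniformly bounded mass, and Allard's compactness theorem for stationary integer rectifiable varifolds produces a subsequential limit $C$ that is a stationary cone at the origin. The inequality $\lan N,Q_0\ran\ge 1/3$ is scale-invariant and cuts out a closed subset of the Grassmannian, hence it passes to $C$. This uniform tilt bound places every point of $\text{spt}(C)\setminus\{0\}$ in the hypothesis of Allard's $\varepsilon$-regularity theorem, and we conclude that $C$ is smoothly embedded away from the vertex, with $M_k\to C$ in $C^1_{\text{loc}}$ on the regular part.

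Since $C$ is a minimal cone, its link $\Si:=C\cap S^{n+m-1}$ is a smooth compact $(n-1)$-dimensional minimal submanifold of the unit sphere whose normal $m$-planes inside $\R^{n+m}$ still satisfy $\lan N,Q_0\ran\ge 1/3$. Theorem \ref{t2} then forces $\Si$ to be totally geodesic, so $C$ is an affine $n$-plane through the origin. Thus every tangent cone of $M$ at infinity is an $n$-plane. A standard conclusion step---applying Allard once more to see that $M$ is $C^1$-asymptotic to a plane at infinity, and then invoking the strong maximum principle/Liouville argument for the bounded subharmonic function $\De_f$ on the complete minimal submanifold $M$---forces $\De_f$ to be constant, the Gauss map to be constant, and therefore $f$ to be affine linear.

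The main obstacle is the preservation of the pointwise inequality $\lan N,Q_0\ran\ge 1/3$ through the blow-down, which requires upgrading the weak varifold convergence to $C^1$ convergence on the regular part. This is exactly what Allard's regularity theorem provides under a uniform excess/tilt bound, and the bound $\De_f\le 3$ is precisely such a tilt bound. The limiting nature of the constant $1/3$ plays no role in this reduction; it enters only at the invocation of Theorem \ref{t2}, which is the deep analytic result carrying the burden of the paper.
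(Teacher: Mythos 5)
Your proposal is correct and follows essentially the same route as the paper: translate the pointwise bound $\De_f\le 3$ into the Gauss-image condition $\lan N,Q_0\ran\ge 1/3$, blow down to a tangent cone at infinity, show the link is a compact minimal submanifold of the sphere inheriting the same Gauss-image bound, invoke the spherical Bernstein theorem to conclude the cone is a plane, and then finish with Allard's regularity. The only minor difference is in how the blow-down limit is extracted: the paper, following Fischer-Colbrie, proves that the rescaled graph functions $f_t(x)=t^{-1}f(tx)$ form an equicontinuous family and passes to a Lipschitz limit by Arzel\`a-Ascoli, whereas you invoke Allard's compactness theorem for stationary integral varifolds directly; both are standard and lead to the same cone $C(M,\infty)$, and both need the same GMT input (regularity of the cone away from the vertex, and preservation of the closed Gauss-image constraint under the convergence), which you correctly flag as the delicate step.
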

The 3 here corresponds to the $1/3$ in the preceding results as will become clear below when we describe the geometry.

In geometric terms, we approach the Bernstein problem via the normal
Gauss map $\g$ of a minimal submanifold $M$ which takes values in the
Grassmannian manifold $\grs{m}{n}$. The strategy which was first
applied in \cite{h-j-w} then is to show that $\g$ is constant. This,
of course, is equivalent to $M$ being affine. The main point here is
that  this Gauss map is harmonic. Therefore, the strategy then leads
to showing that under appropriate geometric conditions, a harmonic map
into a Grassmannian $\grs{m}{n}$ has to be constant. This strategy
could be successfully applied in increasing generality, in
\cite{h-j-w,j-x,j-x-y2,j-x-y3}. Essentially, one tries to translate a
geometric restriction on the size of the Gauss image $\g(M)$ into the
subharmonicity of a composition function $\phi\circ \g$ where
$\phi:\g(M)\to \R$ is a suitable scalar function, for instance a
strictly convex function. When we are in the spherical setting, the
domain $M$ is compact, and so, such a  subharmonic function $\phi
\circ \g$ then has to be constant by the maximum principle. One then
utilizes this to deduce that $\g$ is constant itself. In the Euclidean
setting, $\phi \circ \g$ has to be subjected to clever and subtle
estimates to eventually reach the same conclusion. As already
mentioned, for the case $c_0=1/3$ which we are treating in the present
paper, this strategy by itself is not yet powerful enough and needs to
be supplemented by detailed algebraic considerations. In more precise
terms, we consider eigenspaces for Jordan angles between the tangent
or normal space of our minimal submanifold $M$ and some fixed reference
space. Invoking a theorem of Nomizu \cite{no}, we find that,
generically, they yield smooth subbundles of the tangent or normal
bundle of $M$. This will provide us with decompositions that can be
algebraically exploited.

The paper is organized as follows. In Section \ref{cja}, the basic geometric concepts on Grassmannian manifolds, those  of Jordan angles, angle spaces, multiplicities, anti-involutive automorphisms and
angle space distributions,  are introduced, which will play an important role in our statement, and the connection between the
$w$-function on Grassmannian manifolds and the Jordan angles is revealed. Let $Q_0$ be a fixed point on $\grs{m}{n}$,
then the composition of $w(\cdot,Q_0)$ and the normal Gauss map yields a smooth function on $M$, an arbitrary submanifold
in $\R^{n+m}$, which is called the $w$-function. Based on \cite{fc}, \cite{j-x-y2} and the properties of Jordan angles, one can prove $v:=w^{-1}$ is
a subharmonic function whenever $M$ has parallel mean curvature and $v\leq 3$. We also explore the second fundamental form of $M$ provided that $v\leq 3$, $\De v\equiv 0$ and $|B|^2\neq 0$
everywhere, and discover that $M$ is a simple austere submanfold,  a notion  introduced by Harvey-Lawson \cite{h-l} and further studied by Bryant \cite{br}. These are the main points
of Section \ref{pre}. Finally in Section \ref{be}, combining the Codazzi equations and basic properties of angle spaces, we derive
a spherical Bernstein theorem, and also the corresponding Euclidean Bernstein theorem.

\bigskip\bigskip
\Section{Jordan angles and angle space distributions}{Jordan angles and angle space distributions}
\label{cja}

Let $\R^{n+m}$ be an $(n+m)$-dimensional Euclidean space.
The oriented $m$-spaces in $\R^{n+m}$ constitute the Grassmann manifold $\grs{m}{n}$, which is the Riemannian symmetric space
of compact type $SO(n+m)/ SO(n)\times SO(m)$.

Let $P, Q_0$ be 2 points in $\grs{m}{n}$. The \textit{Jordan angles} between $P$ and
$Q_0$ are the critical values of the angle $\th$
between a nonzero vector $u$ in $P$ and its orthogonal projection
$u^*$ in $Q_0$ as $u$ runs through $P$. If $\th$ is a nonzero
Jordan angle between $P$ and $Q_0$ determined by a unit vector $u$
in $P$ and its projection $u^*$ in $Q_0$, then $u$ is called an
\textit{angle direction} of $P$ relative to $Q_0$, and the $2$-plane
spanned by $u$ and $u^*$ is called an \textit{angle 2-plane} between
$P$ and $Q_0$ (see \cite{w}).

Now, we give a slightly different description for the Jordan angles, which will be useful later.

Denote by $\mc{P}_0$ the orthogonal projection of $\R^{n+m}$ onto
$Q_0$ and by $\mc{P}$  the orthogonal projection of $\R^{n+m}$
onto $P$. Then for an arbitrary vector $u\in P$ and $\ep\in Q_0$,
\begin{equation}
\aligned
\lan \mc{P}_0 u,\ep\ran&=\lan \mc{P}_0 u+(u-\mc{P}_0 u),\ep\ran=\lan u,\ep\ran\\
&=\lan u,\mc{P}\ep+(\ep-\mc{P}\ep)\ran=\lan u,\mc{P}\ep\ran.
\endaligned
\end{equation}
This means that $\mc{P}$ is adjoint to $\mc{P}_0$ with respect to the
canonical Euclidean inner product. Moreover,
\begin{equation}
\lan (\mc{P}\circ\mc{P}_0)u,v\ran=\lan \mc{P}_0u,\mc{P}_0v\ran=\lan
u,(\mc{P}\circ \mc{P}_0)v\ran
\end{equation}
holds for any $u,v\in P$, which implies that $\mc{P}\circ \mc{P}_0$ is a
nonnegative definite self-adjoint transformation on $P$.

For every nonzero $u\in P$,
\begin{equation}
\cos^2 \angle(u,u^*)=\f{\lan u^*,u^*\ran}{\lan u,u\ran}=\f{\lan
\mc{P}_0 u,\mc{P}_0 u\ran}{\lan u,u\ran}=\f{\lan (\mc{P}\circ
\mc{P}_0)u,u\ran}{\lan u,u\ran}.
\end{equation}
Hence $\th$ is a Jordan angle between $P$ and $Q_0$ if and only if
$\mu:=\cos^2\th$ is an eigenvalue of $\mc{P}\circ \mc{P}_0$, and $u$
is an angle direction associated to $\th$ if and only if it is
an eigenvector associated to the eigenvalue $\mu$, i.e.
\begin{equation}
(\mc{P}\circ \mc{P}_0)u=\mu u=\cos^2\th\ u.
\end{equation}
 Therefore, all the angle directions associated to $\th$ constitute a linear subspace of $P$, which is called
an \textit{angle space} of $P$ relative to $Q_0$ and we denote it by
$P_\th$. The dimension of $P_\th$ is called the
\textit{multiplicity} of $\th$, which is denoted by $m_\th$. If we
denote by $\text{Arg}(P,Q_0)$ the set consisting of all the Jordan
angles between $P$ and $Q_0$, then
\begin{equation}\label{oplus1}
P=\bigoplus_{\th\in \text{Arg}(P,Q_0)}P_\th.
\end{equation}
and hence
\begin{equation}
m=\sum_{\th\in \text{Arg}(P,Q_0)}m_\th.
\end{equation}
The angle spaces are mutually orthogonal to each other, and in
particular
\begin{equation}
P_0=P\cap Q_0,\qquad P_{\pi/2}=P\cap Q_0^\bot.
\end{equation}

Similarly, $\th$ is a Jordan angle between $Q_0$ and $P$ if and only
if $\mu:=\cos^2\th$ is an eigenvalue of $\mc{P}_0\circ \mc{P}$.
Denote by $(Q_0)_\th$ the angle space of $Q_0$ relative to $P$ associated to $\th$, then
$\ep\in (Q_0)_\th$ if and only if $(\mc{P}_0\circ
\mc{P})\ep=\cos^2\th\ \ep$, and
\begin{equation}\label{oplus2}
Q_0=\bigoplus_{\th\in \text{Arg}(Q_0,P)}(Q_0)_\th
\end{equation}
with $\text{Arg}(Q_0,P)$ denoting the set consisting of all the
Jordan angles between $Q_0$ and $P$.

Let $P^\bot$ and $Q_0^\bot$ be the orthogonal complements of $P$ and
$Q_0$, and denote by $\mc{P}_0^\bot$ and $\mc{P}^\bot$ the
orthogonal projections of $\R^{n+m}$ onto $P^\bot$ and $Q_0^\bot$,
respectively. As above, the set consisting of all the
Jordan angles between $P^\bot$ and $Q_0^\bot$ is denoted by
$\text{Arg}(P^\bot,Q_0^\bot)$, $P_\th^\bot$ denotes the angle space
associated to $\th\in \text{Arg}(P^\bot,Q_0^\bot)$, and
$m_\th^\bot:=\dim P_\th^\bot$ denotes the multiplicity of $\th$.
 The following lemma
reveals the close relationship between $\text{Arg}(P,Q_0)$, $\Arg(Q_0,P)$ and
$\text{Arg}(P^\bot,Q_0^\bot)$.

\bigskip

\begin{lem}\label{Jordan}

Let $P,Q_0\in \grs{m}{n}$, then $\Arg(P,Q_0)=\Arg(Q_0,P)$ and the multiplicities of each corresponding Jordan angles are equivalent.
If we denote
\begin{equation}
R_\th:=P_\th+(Q_0)_\th
\end{equation}
for each $\th\in \Arg(P,Q_0)$,
then $R_\th\bot R_\si$ whenever $\th\neq \si$, and
\begin{equation}\label{Rth}
P+Q_0=\bigoplus_{\th\in \Arg(P,Q_0)}R_\th.
\end{equation}
For any $\th\in (0,\pi/2]$, $\th\in \Arg(P^\bot,Q_0^\bot)$ if and only if
 $\th\in \Arg(P,Q)$, and $m_\th^\bot=m_\th$, $R_\th=P_\th\oplus P_\th^\bot$. Moreover, for every $\th\in
\text{Arg}(P,Q_0)\cap (0,\pi/2)$, there exists $\Phi_\th: R_\th\ra R_\th$, satisfying

(i) $|\Phi_\th(\xi)|=|\xi|$ for every $\xi\in R_\th$;

(ii) $\Phi_\th^2=-\Id$;

(iii) $\Phi_\th(P_\th)=P_\th^\bot$, $\Phi_\th(P_\th^\bot)=P_\th$;

(iv) For any nonzero vector $u\in P_\th$ ($v\in P_\th^\bot$),
$\Phi_\th(u)$ ($\Phi_\th(v)$) lies in the angle 2-plane generated by
$u$ ($v$); more precisely,
\begin{equation}\label{Phi}\aligned
\sec\th\ \mc{P}_0 u&=\cos\th\ u-\sin\th\ \Phi_\th(u),\\
\sec\th\ \mc{P}_0^\bot v&=\cos\th\ v-\sin\th\ \Phi_\th(v).
\endaligned
\end{equation}
$\Phi_\th$ is called the \textbf{anti-involutive automorphism} associated to $\th$.
\end{lem}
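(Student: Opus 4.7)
The plan is to use the self-adjointness of $\mc{P}\circ\mc{P}_0$ on $P$ and $\mc{P}_0\circ\mc{P}$ on $Q_0$, together with the transfer map $u \mapsto \mc{P}_0 u$ and a rotation-by-$\pi/2$ construction inside each angle 2-plane. For the first part, if $u \in P_\th$ with $\th \in [0, \pi/2)$, a direct calculation gives $(\mc{P}_0\circ\mc{P})(\mc{P}_0 u) = \mc{P}_0((\mc{P}\circ\mc{P}_0)u) = \cos^2\th\, \mc{P}_0 u$, so $\mc{P}_0 u \in (Q_0)_\th$; injectivity on $P_\th$ (from $|\mc{P}_0 u|^2 = \cos^2\th\, |u|^2$) and symmetry give $\dim P_\th = \dim (Q_0)_\th$, and the $\th = \pi/2$ case follows from the dimension identity $m = \sum_\th m_\th$ on each side. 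For the orthogonal direct sum (\ref{Rth}): $P_\th \perp P_\si$ and $(Q_0)_\th \perp (Q_0)_\si$ as eigenspaces of self-adjoint operators, while for $u \in P_\th$, $\ep \in (Q_0)_\si$ one has $\lan u, \ep\ran = \lan \mc{P}_0 u, \ep\ran = 0$ since $\mc{P}_0 u \in (Q_0)_\th \perp (Q_0)_\si$. Combined with (\ref{oplus1}) and (\ref{oplus2}), this yields (\ref{Rth}).

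For the anti-involution $\Phi_\th$, fix $\th \in (0, \pi/2)$ and a unit vector $u \in P_\th$. Write $u = \cos\th\, e + \sin\th\, \nu$, where $e := \sec\th\, \mc{P}_0 u \in (Q_0)_\th$ and $\nu := \csc\th\,(u - \mc{P}_0 u) \in Q_0^\bot$, and define $\Phi_\th(u) := -\sin\th\, e + \cos\th\, \nu$. The key computation is to show that the $P$-component of $\nu$ is precisely $\sin\th\, u$, equivalently that $\mc{P}_0^\bot \nu = \cos\th\,\Phi_\th(u) \in P^\bot$. Combined with $\mc{P}^\bot \Phi_\th(u) = \cos\th\, \nu$ (immediate since $e \in Q_0$, $\nu \in Q_0^\bot$), this gives $(\mc{P}_0^\bot\circ\mc{P}^\bot)(\Phi_\th u) = \cos^2\th\, \Phi_\th u$, so $\Phi_\th(u) \in P_\th^\bot$. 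Consequently $\Phi_\th: P_\th \to P_\th^\bot$ is an isometric isomorphism, $m_\th^\bot = m_\th$, and $R_\th = P_\th \oplus P_\th^\bot$; the $\th = \pi/2$ case is handled by applying the same angle-equivalence to the complements (noting $P_{\pi/2} = P \cap Q_0^\bot$ and $P_{\pi/2}^\bot = Q_0 \cap P^\bot$). Extending $\Phi_\th$ to all of $R_\th$ via the rule $\Phi_\th^2 = -\Id$ makes properties (i)--(iv) and the formulas (\ref{Phi}) immediate from the defining identity.

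The main technical obstacle is verifying the orthogonality of the $\nu$'s across different angle directions: that is, $\lan \nu_i, \nu_j\ran = 0$ whenever $u_i, u_j$ are distinct angle directions (in the same or different $P_\th$). This reduces to the cross-products $\lan \mc{P}_0 u_i, u_j\ran = \cos\th_j\,\lan \mc{P}_0 u_i, e_j\ran$, which vanish because $\mc{P}_0 u_i \in (Q_0)_{\th_i}$ is orthogonal to $e_j \in (Q_0)_{\th_j}$ by the eigenspace decomposition of $\mc{P}_0\circ \mc{P}$ already established in the first paragraph. Once this orthogonality is in place, the $P$-expansion of $\nu$ collapses to $\sin\th\, u$, and the remaining verifications for $\Phi_\th$ become a routine exercise in 2-plane geometry.
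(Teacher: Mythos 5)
Your first paragraph and the construction of $\Phi_\th$ track the paper's proof essentially step for step: the paper also transfers via $\mc{P}_0: P_\th\to(Q_0)_\th$, handles $\th=\pi/2$ by a dimension count, proves orthogonality of the $R_\th$'s the same way, and its $\Phi_{1,\th}(u) = -\sec\th\csc\th\,(\mc{P}^\bot\circ\mc{P}_0)u$ is exactly your rotation $-\sin\th\,e + \cos\th\,\nu$ written in terms of projections. So the approach is the same.

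There is, however, one genuine gap in the sentence ``Consequently $\Phi_\th: P_\th\to P_\th^\bot$ is an isometric isomorphism, $m_\th^\bot = m_\th$, and $R_\th = P_\th\oplus P_\th^\bot$.'' What you have established to that point is only that $\Phi_\th$ is an isometric \emph{injection} $P_\th\hookrightarrow P_\th^\bot$, which gives $m_\th\le m_\th^\bot$; surjectivity is not automatic. The paper closes this by running the identical construction on the pair $(P^\bot,Q_0^\bot)$ to obtain an injection $\Phi_{2,\th}:P_\th^\bot\to P_\th$, hence $m_\th^\bot\le m_\th$, and then $R_\th = P_\th\oplus P_\th^\bot$ falls out of the dimension count $\dim R_\th=\dim P_\th+\dim(Q_0)_\th=2m_\th$ together with $P_\th^\bot=\Phi_\th(P_\th)\subset R_\th$. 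You invoke exactly this symmetry for $\th=\pi/2$, but you need it for $\th\in(0,\pi/2)$ as well; as written the ``isomorphism'' claim is unsupported.

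Two lesser points. You have $\mc{P}^\bot$ and $\mc{P}_0^\bot$ interchanged in a few spots: the identity that is ``immediate since $e\in Q_0$, $\nu\in Q_0^\bot$'' is $\mc{P}_0^\bot\Phi_\th(u)=\cos\th\,\nu$, the ``key computation'' is $\mc{P}^\bot\nu=\cos\th\,\Phi_\th(u)$, and the eigenvalue relation you want is $(\mc{P}^\bot\circ\mc{P}_0^\bot)\Phi_\th(u)=\cos^2\th\,\Phi_\th(u)$, in that order. Also, the final paragraph's detour through pairwise orthogonality of the $\nu_i$'s is unnecessary: the identity $\mc{P}\nu = \csc\th\,\mc{P}(u-\mc{P}_0 u) = \csc\th(u-\cos^2\th\,u)=\sin\th\,u$ follows in one line from $(\mc{P}\circ\mc{P}_0)u=\cos^2\th\,u$, without any basis expansion.
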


\begin{proof}

Given any $\th\in
\text{Arg}(P,Q_0)\cap [0,\pi/2)$ and $u\in P_\th$, letting
$\ep:=\mc{P}_0 u$ gives
\begin{equation}
(\mc{P}_0\circ \mc{P})\ep=(\mc{P}_0\circ \mc{P}\circ
\mc{P}_0)u=\cos^2 \th\ \mc{P}_0 u=\cos^2\th\ \ep.
\end{equation}
Hence $\th\in \text{Arg}(Q_0,P)$ and $\mc{P}_0$ is an injective
linear mapping from $P_\th$ into $(Q_0)_\th$. On the other hand,
assuming $\th\in \text{Arg}(Q_0,P)\cap [0,\pi/2)$, one can deduce
$\th\in \text{Arg}(P,Q_0)$ and $\mc{P}$ is an injective linear
mapping from $(Q_0)_\th$ into $P_\th$. Thus both $\mc{P}_0$ and
$\mc{P}$ are linear isomorphisms whenever $\th\in [0,\pi/2)$, and then
$\dim (Q_0)_\th=\dim P_\th$. In
conjunction with (\ref{oplus1}) and (\ref{oplus2}),
\begin{equation}\label{orth}\aligned
\dim(Q_0)_{\pi/2}&=m-\sum_{\th\in \Arg(Q_0,P)\cap [0,\pi/2)}\dim (Q_0)_\th\\
&=m-\sum_{\th\in \Arg(P,Q_0)\cap [0,\pi/2)}\dim P_\th\\
&=\dim P_{\pi/2}.
\endaligned
\end{equation}
This mean $\pi/2\in \Arg(Q_0,P)$ if and only if $\pi/2\in \Arg(P,Q_0)$ and the multiplicities are equivalent. Therefore,
$\text{Arg}(Q_0,P)=\text{Arg}(P,Q_0)$ and $\dim (Q_0)_\th=\dim
P_\th=m_\th$ for every $\th\in \text{Arg}(P,Q_0)$.

Let $\th,\si$ be distinct Jordan angles between $P$ and $Q_0$, it has been shown above that
$P_\th\bot P_\si$ and $(Q_0)_\th\bot (Q_0)_\si$. For any $u\in P_\th$ and $\ep\in (Q_0)_\si$,
$$\lan u,\ep\ran=\lan u,\mc{P}(\ep)\ran \in \lan P_\th,P_\si\ran=(0),$$
which implies $P_\th\bot (Q_0)_\si$ and similarly $(Q_0)_\th\bot P_\si$. Hence $R_\th\bot R_\si$ and then
(\ref{Rth}) immediately follows from (\ref{oplus1}) and (\ref{oplus2}).

If $\th\in (0,\pi/2)$ is a Jordan angle between $P$ and $Q_0$, then
for any nonzero vector $u\in P_\th$, we have $|\mc{P}_0 u|=\cos\th
|u|$ and $(\mc{P}\circ \mc{P}_0)u=\cos^2 \th\ u$. Denote
\begin{equation}\label{ep1}
\ep:=\sec\th\ \mc{P}_0 u,
\end{equation}
then $|\ep|=|u|$. Now we put
\begin{equation}
v:=-\sec\th\csc\th(\mc{P}^\bot\circ \mc{P}_0)u\in P^\bot,
\end{equation}
then
\begin{equation}\label{Phi1}\aligned
\ep&=\sec\th\ \mc{P}_0 u=\sec\th\big[(\mc{P}\circ \mc{P}_0)u+(\mc{P}^\bot\circ \mc{P}_0)u\big]\\
&=\cos\th\ u-\sin\th\ v
\endaligned
\end{equation}
which implies $v\in \text{span}\{u,\mc{P}_0u\}\subset R_\th$ and $|v|=|u|$. Since
\begin{equation*}
\aligned
(\mc{P}^\bot\circ \mc{P}_0^\bot)v&=-\sec\th\csc\th (\mc{P}^\bot\circ \mc{P}_0^\bot \circ (\Id-\mc{P})\circ \mc{P}_0) u\\
&=\sec\th\csc\th(\mc{P}^\bot\circ (\Id-\mc{P}_0)\circ \mc{P}\circ\mc{P}_0) u\\
&=\cot\th(\mc{P}^\bot\circ (\Id-\mc{P}_0))u\\
&=-\cot\th(\mc{P}^\bot\circ \mc{P}_0)u=\cos^2\th\ v,
\endaligned
\end{equation*}
$\th\in \text{Arg}(P^\bot,Q_0^\bot)$, $v\in P_\th^\bot$ and
\begin{equation}\label{Phi5}
\Phi_{1,\th}: u\mapsto -\sec\th\csc\th(\mc{P}^\bot\circ \mc{P}_0)u
\end{equation}
is injective from $P_\th$ into $P_\th^\bot$, which
implies $m_\th^\bot\geq m_\th$.
(\ref{Phi1}) then becomes 
\begin{equation}\label{Phi3}
\sec\th\ \mc{P}_0 u=\cos\th\ u-\sin\th\ \Phi_{1,\th}(u).
\end{equation}

Similarly, if $\th\in (0,\pi/2)$ is a Jordan angle between $P^\bot$
and $Q_0^\bot$, then $\th\in \text{Arg}(P,Q_0)$ and
\begin{equation}\label{Phi2}
\Phi_{2,\th}:v\mapsto -\sec\th\csc\th(\mc{P}\circ \mc{P}_0^\bot)v
\end{equation}
is also injective from $P_\th^\bot$ into $P_\th$,
which implies
\begin{equation}\label{Phi4}
\sec\th\ \mc{P}_0^\bot v=\cos\th\ v-\sin\th\ \Phi_{2,\th}(v)
\end{equation}
and $m_\th\geq m_\th^\bot$.
Therefore $m_\th=m_\th^\bot$, and both $\Phi_{1,\th}$ and $\Phi_{2,\th}$
are isomorphisms.

For any $\th\in \Arg(P,Q_0)\cap (0,\pi/2)$, $P_\th^\bot=\Phi_{1,\th}(P_\th)\subset R_\th$
gives $P_\th\oplus P_\th^\bot\subset R_\th$. On the other hand,
$P_\th\cap (Q_0)_\th\subset (P\cap Q_0)\cap P_\th=P_0\cap P_\th=(0)$
implies $\dim R_\th=\dim P_\th+\dim (Q_0)_\th=2m_\th=\dim P_\th+\dim P_\th^\bot$, then $R_\th=P_\th\oplus P_\th^\bot$. Now we define $\Phi_\th:
R_\th\ra R_\th$
\begin{equation}
u+v\mapsto \Phi_{1,\th}(u)+\Phi_{2,\th}(v)\qquad \forall u\in P_\th, v\in P_\th^\bot.
\end{equation}
Then $\Phi_\th$ is an isometric automorphism,
$\Phi_\th(P_\th)=P_\th^\bot$, $\Phi_\th(P_\th^\bot)=P_\th$ and (\ref{Phi}) immediately follows from
(\ref{Phi3}) and (\ref{Phi4}). For any
$u\in P_\th$, (\ref{Phi5}) and (\ref{Phi2}) implies 
$$\aligned
\Phi_\th^2(u)&=\sec^2\th \csc^2\th (\mc{P}\circ \mc{P}_0^\bot\circ \mc{P}^\bot\circ \mc{P}_0)u\\
&=\sec^2\th \csc^2\th (\mc{P}\circ \mc{P}_0^\bot\circ (\Id-\mc{P})\circ\mc{P}_0)u\\
&=-\sec^2\th \csc^2\th(\mc{P}\circ \mc{P}_0^\bot \circ \mc{P}\circ \mc{P}_0)u\\
&=-\csc^2\th (\mc{P}\circ \mc{P}_0^\bot)u=\csc^2\th(\mc{P}\circ (\Id-\mc{P}_0)u)\\
&=-u
\endaligned$$
and similarly $\Phi_\th^2(v)=-v$ for each $v\in P_\th^\bot$. Hence
$\Phi_\th^2=-\Id$.

It remains to prove $m_{\pi/2}=m_{\pi/2}^\bot$. By (\ref{orth}),
\begin{equation}
\aligned
m_{\pi/2}^\bot&=\dim P_{\pi/2}^\bot=\dim (P^\bot\cap Q_0)\\
&=\dim (Q_0)_{\pi/2}=\dim P_{\pi/2}\\
&=m_{\pi/2}.
\endaligned
\end{equation}

\end{proof}

Denote
\begin{equation}
r:=\sum_{\th\in \text{Arg}(P,Q_0)\cap
(0,\pi/2]}m_\th=\sum_{\th\in \text{Arg}(P^\bot,Q_0^\bot)\cap
(0,\pi/2]}m_\th^\bot
\end{equation}
then $0\in \text{Arg}(P,Q_0)$ if and only if $r<m$, and
$m_0=m-r$. Similarly $0\in \text{Arg}(P^\bot,Q_0^\bot)$ if and
only if $r<n$, and $m_0^\bot=n-r$.

It is well-known that $\grs{m}{n}$ can be viewed as a
submanifold of some Euclidean space via the Pl\"ucker embedding. The
restriction of the Euclidean inner product on $\grs{m}{n}$ is
denoted by $w:\grs{m}{n}\times \grs{m}{n}\ra \R$. Let
$\{\ep_1,\cdots,\ep_m\}$ and $\{u_1,\cdots,u_m\}$ denote oriented orthonormal bases of $Q_0$ and $P$,
respectively, then
\begin{equation}
\aligned
w(P,Q_0)&=\lan u_1\w\cdots\w u_m,\ep_1\w \cdots\w \ep_m\ran\\
&=\left|\begin{array}{ccc}
\lan u_1,\ep_1\ran & \cdots & \lan u_1,\ep_m\ran\\
                   & \cdots &                   \\
\lan u_m,\ep_1\ran & \cdots & \lan u_m,\ep_m\ran
\end{array}\right|\\
&=\left|\begin{array}{ccc}
\lan \mc{P}_0u_1,\ep_1\ran & \cdots & \lan \mc{P}_0u_1,\ep_m\ran\\
                   & \cdots &                   \\
\lan \mc{P}_0u_m,\ep_1\ran & \cdots & \lan \mc{P}_0u_m,\ep_m\ran
\end{array}\right|\\
&=\lan \mc{P}_0 u_1\w\cdots\w \mc{P}_0 u_m,\ep_1\w \cdots\w
\ep_m\ran.
\endaligned
\end{equation}
Note that the definition of $w(P,Q_0)$ does not depend on the
choices of oriented orthonormal bases of $P$ and $Q_0$.

As shown above, one can choose an orthonormal basis
$\{u_1,\cdots,u_m\}$ of $P$, such that $P=u_1\w\cdots\w u_m$,
and each $u_\a$ is an angle direction of $P$ relative to $Q_0$. More
precisely,
\begin{equation}
(\mc{P}\circ\mc{P}_0)u_\a=\cos^2\th_\a u_\a,
\end{equation}
where $\th_\a$ is a Jordan angle between $P$ and $Q_0$. Hence
\begin{equation}\label{basis}\aligned
\lan \mc{P}_0 u_\a,\mc{P}_0 u_\be\ran&=\lan \mc{P}_0 u_\a,u_\be\ran=\lan (\mc{P}\circ\mc{P}_0)u_\a,u_\be\ran\\
&=\cos^2\th_\a\lan u_\a,u_\be\ran=\cos^2\th_\a \de_{\a\be}.
\endaligned
\end{equation}

If $\pi/2\in \text{Arg}(P,Q_0)$, then there exists $\a$, such that
$\th_\a=\pi/2$, which implies $\mc{P}_0 u_\a=0$, and moreover
\begin{equation}
w(P,Q_0)=\lan \mc{P}_0 u_1\w \cdots\w \mc{P}_0 u_m,\ep_1\w\cdots\w
\ep_m\ran=0.
\end{equation}
Otherwise, by (\ref{basis}), $\{\ep'_\a=\f{\mc{P}_0 u_\a}{|\mc{P}_0
u_\a|}=\sec\th_\a (\mc{P}_0 u_\a):1\leq \a\leq m\}$ is an
orthonormal basis of $Q_0$, thus $Q_0=\ep'_1\w\cdots\w \ep'_m$ or
$-\ep'_1\w\cdots\w \ep'_m$. If $Q_0=\ep'_1\w\cdots\w \ep'_m$, then
\begin{equation}\aligned
w(P,Q_0)&=\lan \mc{P}_0u_1\w\cdots\w\mc{P}_0 u_m,\ep'_1\w\cdots\w\ep'_m\ran\\
&=\lan \cos\th_1\ep'_1\w\cdots\w \cos\th_m\ep'_m,\ep'_1\w\cdots\w \ep'_m\ran\\
&=\prod_{\a} \cos\th_\a \lan
\ep'_1\w\cdots\w\ep'_m,\ep'_1\w\cdots\w\ep'_m\ran=\prod_{\a}
\cos\th_\a;
\endaligned
\end{equation}
otherwise $Q_0=-\ep'_1\w\cdots\w \ep'_m$ and a similar calculation
shows
\begin{equation}
w(P,Q_0)=-\prod_{\a} \cos\th_\a.
\end{equation}
In summary,
\begin{equation}\label{w}
|w(P,Q_0)|=\prod_{\a} \cos\th_\a
\end{equation}
and $w(P,Q_0)>0$ if and only if $\mc{P}_0|_P$ is an orientation
preserving map from $P$ onto $Q_0$.
\bigskip

\begin{lem}\label{basis3}
If $w(P,Q_0)>0$, let
$$\pi/2>\th_1\geq \cdots\geq \th_r>\th_{r+1}=\cdots =\th_m=0$$
be the Jordan angles between $P$ and $Q_0$, then there exist an orthonormal basis
$\{\ep_1,\cdots,\ep_m\}$ of $Q_0$, an orthonormal basis $\{u_1,\cdots,u_m\}$ of
$P$ and an orthonormal basis $\{v_1,\cdots,v_n\}$ of $P^\bot$, such that
$Q_0=\ep_1\w\cdots\w \ep_m$, $P=u_1\w\cdots\w u_m$, $\mc{P}_0 u_\a=\cos\th_\a \ep_\a$
for each $1\leq \a\leq m$ and $\mc{P}_0 v_i=-\sin \th_i \ep_i$ for each $1\leq i\leq n$. Here we additionally assume
$\th_i:=0$ and $\ep_i:=0$ for any $m+1\leq i\leq n$. In particular, for each $1\leq \a\leq r$, $v_\a=\Phi_{\th_\a}(u_\a)$,
with $\Phi_{\th_\a}$ denoting the anti-involutive automorphism associated to $\th_\a$.
\end{lem}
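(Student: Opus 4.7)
The plan is to invoke the angle space decomposition of Lemma~\ref{Jordan} together with the observation that the hypothesis $w(P,Q_0)>0$ forces every Jordan angle to lie in $[0,\pi/2)$, so in particular $\mc{P}_0\colon P\to Q_0$ is a linear isomorphism by the argument around (\ref{orth}). I then pick an orthonormal basis $\{u_1,\ldots,u_m\}$ of $P$ consisting of eigenvectors of $\mc{P}\circ\mc{P}_0$, indexed so that $u_\a\in P_{\th_\a}$ in accordance with the stated ordering; flipping the sign of a single basis vector if necessary, I arrange $P=u_1\w\cdots\w u_m$. Setting $\ep_\a:=\sec\th_\a\,\mc{P}_0u_\a$ for $1\leq\a\leq r$ and $\ep_\a:=u_\a$ for $r+1\leq\a\leq m$ (where $u_\a\in P_0=P\cap Q_0$), the identity (\ref{basis}) immediately yields $\lan\ep_\a,\ep_\be\ran=\de_{\a\be}$, so $\{\ep_\a\}$ is an orthonormal basis of $Q_0$; the determinant calculation leading to (\ref{w}) combined with $w(P,Q_0)>0$ then forces the orientation $Q_0=\ep_1\w\cdots\w\ep_m$, and $\mc{P}_0u_\a=\cos\th_\a\,\ep_\a$ holds by construction.

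For each $1\leq\a\leq r$ I next define $v_\a:=\Phi_{\th_\a}(u_\a)$. Properties (i) and (iii) of Lemma~\ref{Jordan} place $v_\a$ in $P_{\th_\a}^\bot$ with unit norm, and the mutual orthogonality of angle spaces associated to distinct Jordan angles, together with the isometry property of each $\Phi_{\th_\a}$, makes $\{v_1,\ldots,v_r\}$ orthonormal in $P^\bot$. Applying the second identity of (\ref{Phi}) to $v_\a$ and using $\Phi_{\th_\a}^2=-\Id$ gives
\[
\mc{P}_0^\bot v_\a=\cos^2\th_\a\,v_\a+\sin\th_\a\cos\th_\a\,u_\a,
\]
so that
\[
\mc{P}_0 v_\a=v_\a-\mc{P}_0^\bot v_\a=\sin^2\th_\a\,v_\a-\sin\th_\a\cos\th_\a\,u_\a=-\sin\th_\a\,\ep_\a,
\]
where the final equality uses $\ep_\a=\cos\th_\a\,u_\a-\sin\th_\a\,v_\a$ from the first identity of (\ref{Phi}).

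Finally I extend $\{v_1,\ldots,v_r\}$ to an orthonormal basis of $P^\bot$ by appending any orthonormal basis $\{v_{r+1},\ldots,v_n\}$ of $P_0^\bot=P^\bot\cap Q_0^\bot$, which has dimension $m_0^\bot=n-r$. For these indices $\mc{P}_0v_i=0$, matching $-\sin\th_i\,\ep_i=0$ under the conventions $\th_i=0$ and $\ep_i=0$ for $i>m$. The argument is essentially bookkeeping once Lemma~\ref{Jordan} is in hand; the two places requiring care are the orientation matching in the construction of $\{\ep_\a\}$, which is resolved by the sign analysis behind (\ref{w}), and the negative sign in the projection formula for $v_\a$, which ultimately traces back to the anti-involutive relation $\Phi_{\th_\a}^2=-\Id$.
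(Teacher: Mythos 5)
Your proposal is correct and follows essentially the same approach as the paper: both start from the eigenbasis $\{u_\a\}$ of $\mc{P}\circ\mc{P}_0$, define $\ep_\a=\sec\th_\a\,\mc{P}_0u_\a$, use (\ref{basis}) and $w(P,Q_0)>0$ to get an oriented orthonormal basis of $Q_0$, set $v_\a=\Phi_{\th_\a}(u_\a)$, and extend by a basis of $P^\bot\cap Q_0^\bot$. The only (cosmetic) difference is in deriving $\mc{P}_0 v_\a=-\sin\th_\a\,\ep_\a$: the paper simply applies $\mc{P}_0$ to the identity $\ep_\a=\cos\th_\a\,u_\a-\sin\th_\a\,v_\a$ and solves, whereas you first compute $\mc{P}_0^\bot v_\a$ from the second formula in (\ref{Phi}) together with $\Phi_{\th_\a}^2=-\Id$ and then subtract from $v_\a$; both are short, correct, and equivalent.
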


\begin{proof}
Let $\{u_1,\cdots,u_m\}$ be an orthonormal basis of $P$, such that $P=u_1\w\cdots\w u_m$ and
$(\mc{P}\circ \mc{P}_0)u_\a=\cos^2 \th_\a u_\a$. Putting
\begin{equation}
\ep_\a:=\f{\mc{P}_0 u_\a}{|\mc{P}_0 u_\a|}=\sec \th_\a (\mc{P}_0 u_\a)\in Q_0\qquad \forall 1\leq \a\leq m,
\end{equation}
then (\ref{basis}) gives $\lan \ep_\a,\ep_\be\ran=\de_{\a\be}$ and hence $\{\ep_1,\cdots,\ep_m\}$ is an orthonormal
basis of $Q_0$. $w(P,Q_0)>0$ implies that $\mc{P}_0|_P$ is an orientation preserving map, thus $Q_0=\ep_1\w\cdots\w\ep_m$.

For each $1\leq \a\leq r$, let $v_\a:=\Phi_{\th_\a}(u_\a)$.
By Lemma \ref{Jordan},
\begin{equation}\label{basis2}
\ep_\a=\sec\th_\a(\mc{P}_0 u_\a)=\cos\th_\a u_\a-\sin\th_\a v_\a
\end{equation}
and
\begin{equation}
\lan v_\a,v_\be\ran=\de_{\a\be}\qquad \forall 1\leq
\a,\be\leq r.
\end{equation}
Let $\{v_{r+1},\cdots,v_n\}$ be an orthonormal basis of $P_0^\bot=P^\bot\cap Q_0^\bot$, then $\{v_1,\cdots,v_n\}$ is an orthonormal basis of
$P^\bot$. Applying (\ref{basis2}) gives
$$\aligned
\ep_\a&=\mc{P}_0 \ep_\a=\cos\th_\a(\mc{P}_0 u_\a)-\sin\th_\a(\mc{P}_0 v_\a)\\
&=\cos^2\th_\a\ep_\a-\sin\th_\a(\mc{P}_0 v_\a),
\endaligned
$$
i.e.
\begin{equation}
\mc{P}_0 v_\a=-\sin\th_\a\ep_\a\qquad \forall 1\leq \a\leq r.
\end{equation}
On the other hand, $v_i\in Q_0^\bot$ implies $\mc{P}_0 v_i=0$ for
each $r+1\leq i\leq n$. Hence
\begin{equation}
\mc{P}_0 v_i=-\sin\th_i \ep_i \qquad \forall 1\leq i\leq n.
\end{equation}

\end{proof}

Let $M$ be an $n$-dimensional submanifold in $\R^{n+m}$ and $Q_0$ be an fixed
$m$-dimensional subspace in $\R^{n+m}$. Denote by $TM$ and $NM$
the tangent bundle and the normal bundle along $M$, respectively.
Let $\th$ be a $[0,\pi/2]$-valued smooth function on $M$, if
$\th(p)\in \Arg(N_p M, Q_0)$ ($\th(p)\in \Arg(T_p M,Q_0^\bot)$),
we say $\th$ is a \textit{normal (tangent) Jordan angle function}  of $M$ relative to $Q_0$. Denote by $\Arg^N$ ($\Arg^T$)
the set consisting of all the normal (tangent) Jordan angle functions of
$M$ relative to $Q_0$. If $\th$ is a smooth function on $M$ that is nonzero
everywhere, then Lemma \ref{Jordan} implies $\th\in \Arg^N$ if and only if $\th\in \Arg^T$.

Denote
\begin{equation}\aligned
N_\th M&:=\{\nu\in N_p M: p\in M, \nu\text{ is an angle direction associated to }\th(p)\},\\
T_\th M&:=\{v\in T_p M: p\in M, v\text{ is an angle direction associated to }\th(p)\}.
\endaligned
\end{equation}
Let $\mc{P}_0$ and $\mc{P}_0^\bot$ be orthogonal projections onto $Q_0$ and $Q_0^\bot$,
$(\cdot)^T$ and $(\cdot)^N$ denote orthogonal projections onto $T_p M$ and $N_p M$, respectively.
Then $\nu\in N_\th M\cap N_p M$ if and only if
\begin{equation}
(\mc{P}_0 \nu)^N=\cos^2\th(p) \nu
\end{equation}
and similarly $u\in T_\th M\cap T_p M$ if and only if
\begin{equation}
(\mc{P}_0^\bot u)^T=\cos^2\th(p) u.
\end{equation}
Let $m_\th^N(p):=\dim (N_\th M\cap N_p M)$, $m_\th^T(p):=\dim (T_\th M\cap T_p M)$ for every $p\in M$, then
$m_\th^N$ and $m_\th^T$ are both $\Bbb{Z}^+$-valued functions on $M$.

\begin{lem}\label{dis}
Let $\th$ be a normal (tangent) Jordan angle function of $M$ relative to $Q_0$.
If $m_\th^N$ ($m_\th^T$) is a constant function on $M$, then $N_\th M$ ($T_\th M$) is a smooth subbundle of $NM$ ($TM$).
In this case, $N_\th M$ ($T_\th M$) is said to be a \textbf{normal (tangent) angle space distribution} associated to $\th$.
\end{lem}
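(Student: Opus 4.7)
The plan is to realize the fiber $N_\th M\cap N_p M$ as an eigenspace of a smooth, fiberwise self-adjoint endomorphism of $NM$ whose corresponding eigenvalue varies smoothly and whose multiplicity is constant. Once this is set up, the standard fact that the spectral projection of a smooth family of self-adjoint operators depends smoothly on the parameter, provided the eigenvalue in question stays separated from the rest of the spectrum with constant multiplicity, will give the desired smooth subbundle structure. I will describe only the normal case; the tangent case is identical with $\mc{P}_0$ replaced by $\mc{P}_0^\bot$ and $(\cdot)^N$ replaced by $(\cdot)^T$.

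First, I would define $A\in \G(\text{End}(NM))$ by $A_p(\nu)=(\mc{P}_0\nu)^N$ for $\nu\in N_p M$. Since $\mc{P}_0$ is a fixed linear projection on $\R^{n+m}$ and the normal projection $(\cdot)^N$ is a smooth section of $\text{End}(\R^{n+m})|_M$, $A$ is a smooth bundle endomorphism. The computation preceding the statement (the adjointness of $\mc{P}$ and $\mc{P}_0$) shows that $A_p$ is self-adjoint and nonnegative on $N_pM$, and that $\nu\in N_\th M\cap N_pM$ if and only if $A_p\nu=\cos^2\th(p)\,\nu$. Thus the fiber of $N_\th M$ at $p$ is exactly the eigenspace of $A_p$ associated to the smooth eigenvalue $\mu(p):=\cos^2\th(p)$.

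Next, fix $p_0\in M$. Since $A_{p_0}$ has finitely many eigenvalues, I can pick $\eps>0$ so that the closed disk $\overline{D}(\mu(p_0),2\eps)\subset\C$ meets $\mathrm{spec}(A_{p_0})$ only in $\{\mu(p_0)\}$. By the continuity of $A$ and $\mu$, there is a neighborhood $U$ of $p_0$ on which $\mu(p)\in D(\mu(p_0),\eps)$ and no other eigenvalue of $A_p$ lies in $\overline{D}(\mu(p_0),2\eps)$. On $U$ the spectral projection
\begin{equation}
\Pi_p=\f{1}{2\pi i}\oint_{\p D(\mu(p_0),\f{3\eps}{2})}(zI-A_p)^{-1}\,dz
\end{equation}
is well defined and depends smoothly on $p$, since $(zI-A_p)^{-1}$ depends smoothly on $p$ uniformly for $z$ on the contour. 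The image of $\Pi_p$ is the direct sum of all eigenspaces of $A_p$ whose eigenvalues lie inside the contour, which by the choice of $\eps$ is exactly the $\mu(p)$-eigenspace, i.e.\ $N_\th M\cap N_pM$.

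Finally, the rank of $\Pi_p$ equals $m_\th^N(p)$, which is constant by hypothesis. Hence $\Pi$ is a smooth family of projections of constant rank, and its image $\bigcup_{p\in U}\Pi_p(N_pM)=N_\th M|_U$ is a smooth subbundle of $NM|_U$ (a local smooth frame can be produced by projecting any local frame of $\Pi_{p_0}(N_{p_0}M)$ by $\Pi_p$ and applying Gram-Schmidt). Since $p_0$ was arbitrary, $N_\th M$ is a smooth subbundle of $NM$. The main delicate point is ensuring the contour separation in the previous step; this is precisely what the constancy of $m_\th^N$ guarantees, for otherwise eigenvalue collisions could cause $\mu(p)$ to merge with neighboring eigenvalues and destroy the smoothness of $\Pi_p$.
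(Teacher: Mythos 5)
Your proof is correct, but it follows a genuinely different route from the paper's. The paper works in local coordinates: it picks a coordinate chart with an orthonormal frame, writes the operator as a smooth symmetric-matrix-valued function $A(p)=\big(\lan\mc{P}_0^\bot e_i,e_j\ran\big)$, and then invokes a theorem of Nomizu (cited as [no]) on differentiable families of symmetric matrices possessing an eigenvalue of constant multiplicity, which directly supplies $k$ smooth, pointwise linearly independent eigenvector fields $\xi_1,\ldots,\xi_k$; these are reassembled into a local frame for $T_\th M$. You instead view the same operator intrinsically as a smooth self-adjoint bundle endomorphism and build the subbundle from the Riesz spectral projection, bypassing the citation. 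The two arguments hinge on the same three facts (self-adjointness, smoothness of $\mu=\cos^2\th$, constant multiplicity), and Nomizu's proof is itself essentially the contour-integral argument you wrote out, so they are close in substance; yours is more self-contained and coordinate-free, while the paper's is shorter because it offloads the spectral-projection machinery to [no]. One small correction to your closing remark: eigenvalue collisions inside the contour do not destroy the smoothness of $\Pi_p$ --- the integral remains smooth as long as the contour itself stays in the resolvent set of $A_p$, and near $p_0$ this follows from continuity of the spectrum alone, independently of multiplicities. What the constancy of $m_\th^N$ actually buys is the identification $\mathrm{Im}\,\Pi_p=N_\th M\cap N_pM$: the rank of $\Pi_p$ is locally constant and equals $k$ at $p_0$, and the $\cos^2\th(p)$-eigenspace (of dimension exactly $k$ by hypothesis) is contained in $\mathrm{Im}\,\Pi_p$, forcing equality; without constant multiplicity, $\mathrm{Im}\,\Pi_p$ could strictly contain the $\mu(p)$-eigenspace for $p$ near $p_0$.
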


\begin{proof}
We only give the proof for the tangential case, because it is similar for the normal case.

For any $p_0\in M$, let $U$ be a coordinate chart around $p_0$ and $\{e_1,\cdots,e_n\}$ be a orthonormal tangent frame field on $U$.
Denote
\begin{equation}
A_{ij}(p)=\lan \mc{P}_0^\bot e_i,e_j\ran\qquad \forall 1\leq i,j\leq n,
\end{equation}
then
$$\aligned
A_{ij}(p)&=\lan \mc{P}_0^\bot e_i,e_j\ran=\lan \mc{P}_0^\bot e_i,\mc{P}_0^\bot e_j\ran\\
&=\lan \mc{P}_0^\bot e_j,\mc{P}_0^\bot e_i\ran=\lan \mc{P}_0^\bot e_j,e_i\ran=A_{ji}(p)
\endaligned$$
and hence $p\mapsto A(p)$ is a smooth mapping from $U$ into $S_n$, the set of all $n\times n$
real symmetric matrices. For any $u=\sum_i u_i e_i\in TU$,
$$\aligned
(\mc{P}_0^\bot u)^T&=\sum_j \lan \mc{P}_0^\bot u,e_j\ran e_j=\sum_{i,j}\lan u_i(\mc{P}_0^\bot e_i),e_j\ran e_j\\
&=\sum_{i,j}u_i A_{ij}e_j.
\endaligned$$
Thus $(\mc{P}_0^\bot u)^T=\cos^2 \th(p)u$ if and only if $A(p)\xi=\cos^2\th(p)\xi$, with $\xi:=(u_1\ \cdots \ u_n)^T$.
Furthermore, $\th\in \Arg^T$ if and only if $\la(p):=\cos^2 \th(p)$ is a characteristic root of $A(p)$ for every $p\in U$,
and the multiplicity of $\la(p)$ equals $m_\th^T(p)$. If $m_\th^T\equiv k$, the main theorem in \cite{no}
enable us to find smooth mappings $\xi_1,\cdots,\xi_k:V\ra \R^n$, where $V$ is a neighborhood of $p_0$, such that for every $p\in V$,
$A(p)\xi_i=\la(p)\xi_i$ holds for $1\leq i\leq k$ and $\{\xi_1(p),\cdots,\xi_m(p)\}$ is linear independent. Denote
$\xi_i=(u_{i1},\cdots,u_{in})^T$ and let
$$X_i=\sum_j u_{ij}e_j,$$
then $X_1,\cdots,X_m$ are smooth tangent vector fields on $V$ and $T_\th M\cap T_p M$ is spanned by $X_1(p),\cdots,X_m(p)$ for any $p\in V$.
Finally the arbitrariness of $p_0$ ensures that $T_\th M$ is a smooth distribution on $M$.

\end{proof}

\bigskip\bigskip

\Section{Subharmonic functions}{Subharmonic functions}\label{pre}

Let $\bar{M}^{n+m}$ be a Riemannian manifold, and $M^n\rightarrow
\bar{M}^{n+m}$ be an isometric immersion.
The second fundamental form $B$ is a pointwise symmetric bilinear
form on $T_p M$ ($p\in M$) with values in $N_p M$ defined by
$$B_{XY}=(\overline{\n}_X Y)^N$$
with $\overline{\n}$ the Levi-Civita connection on $\bar{M}$. The
induced connections on $TM$ and $NM$ are defined by
$$\n_X Y=(\overline{\n}_X Y)^T,\qquad \n_X \nu=(\overline{\n}_X \nu)^N.$$
Here $X,Y$ are smooth sections of $TM$ and $\nu$ denotes a smooth
sections of $NM$. The second fundamental form, the curvature tensor
of the submanifold, the curvature tensor of the normal bundle and
the curvature tensor of the ambient manifold satisfy the Gauss
equations, the Codazzi equations and the Ricci equations (see
\cite{x} for details).

The trace of the second fundamental form gives a normal vector field
$H$ on $M$, which is called the mean curvature vector field. If $\n
H\equiv 0$, then we say $M$ has parallel mean curvature. Moreover if
$H\equiv 0$, $M$ is called a minimal submanifold in $\bar{M}$.

Now we consider an $n$-dimensional oriented submanifold $M$ in $\R^{n+m}$.
 Let $\{e_1,\cdots,e_n\}$ be a local orthonormal tangent frame field,
$\{\nu_1,\cdots,\nu_m\}$ be a local orthonormal  normal frame field
on $M$, and
$$h_{\a,ij}:=\lan B_{e_i e_j},\nu_\a\ran$$
be the coefficients of the second fundamental form $B$. We shall use
the summation convention and agree on the ranges of indices
$$1\leq i,j,k,l\leq n,\qquad 1\leq \a,\be,\g,\de\leq m.$$

The normal Gauss map
$\g: M\ra \grs{m}{n}$ is defined by
$$\g(p)=N_p M\in \grs{m}{n}$$
via parallel translation in $\R^{n+m}$ for every $p\in M$.
Let $Q_0$ be a fixed point in $\grs{m}{n}$ and
define
\begin{equation}
\aligned w:=w(\cdot,Q_0)\circ \g =\lan \nu_1\w\cdots\w \nu_m,Q_0\ran.
\endaligned
\end{equation}
 By the Codazzi equations,
it is not hard to get basic formulas for the function $w$ as
follows.

\begin{pro}(\cite{fc}\cite{x1})
Let $M$ be a submanifold in $\R^{n+m}$, then
\begin{equation}\label{dw}
\n_{e_i} w=-h_{\a,ij}\lan \nu_{\a j},Q_0\ran
\end{equation}
with
\begin{equation}
\nu_{\a j}:=\nu_1\w\cdots \w e_j\w \cdots\w \nu_m
\end{equation}
that is obtained by replacing $\nu_\a$ by $e_j$  in $\nu_1\w\cdots\w
\nu_m$. Moreover if $M$ has parallel mean curvature, then
\begin{equation}\label{La}
\De w=-|B|^2 w+\sum_i\sum_{\a\neq \be,j\neq k}h_{\a,ij} h_{\be,ik}
\lan \nu_{\a j,\be k},Q_0\ran.
\end{equation}
with
\begin{equation}
\nu_{\a j,\be k}:=\nu_1\w\cdots\w e_j \w\cdots\w e_k\w \cdots\w
\nu_m
\end{equation}
that is obtained by replacing  $\nu_\a$ by $e_j$  and $\nu_\be$ by
$e_k$ in $\nu_1\w\cdots\w \nu_m$, respectively.

\end{pro}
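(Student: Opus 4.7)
The plan is to compute both formulas in a synchronously adapted frame: at a fixed point $p_0\in M$ choose a local orthonormal tangent frame $\{e_i\}$ and normal frame $\{\nu_\a\}$ such that $\n_{e_i}e_j(p_0)=0$ and $\n^N_{e_i}\nu_\a(p_0)=0$. Since $w$ and $|B|^2$ are scalar functions independent of the choice of frame, verifying both formulas at the arbitrary point $p_0$ suffices. Throughout, I use the Gauss formula $\ol\n_{e_i}e_j=\n_{e_i}e_j+B_{e_ie_j}$ and the Weingarten formula $\ol\n_{e_i}\nu_\a=-h_{\a,ij}e_j+\n^N_{e_i}\nu_\a$.

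For the first formula, I compute $\n_{e_i}w=e_i\lan\nu_1\w\cdots\w\nu_m,Q_0\ran$. Differentiating through the wedge gives one term per slot; in the $\a$-th slot, $\ol\n_{e_i}\nu_\a$ contributes both a tangential piece $-h_{\a,ij}e_j$ and a normal piece $\n^N_{e_i}\nu_\a$. The tangential pieces yield exactly $-h_{\a,ij}\lan\nu_{\a j},Q_0\ran$. For a generic point I would note that the normal-connection contributions cancel pairwise: writing $\n^N_{e_i}\nu_\a=\om_{\a\be}(e_i)\nu_\be$ with $\om_{\a\be}$ skew-symmetric, the terms of the form $\nu_1\w\cdots\w\om_{\a\be}\nu_\be\w\cdots\w\nu_m$ vanish when $\be\ne\a$ (duplicated factor) and we are left with a skew-symmetric quantity summed symmetrically. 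At $p_0$ the vanishing is immediate since we chose $\n^N\nu_\a(p_0)=0$. This yields \eqref{dw}.

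For the Laplacian, I differentiate again at $p_0$, where $\De w=\sum_i e_i\n_{e_i}w$. Applying $e_i$ to $-h_{\a,ij}\lan\nu_{\a j},Q_0\ran$ splits into two contributions. The first, $-\sum_i(e_ih_{\a,ij})\lan\nu_{\a j},Q_0\ran$, vanishes: by the Codazzi equation in $\ir{n+m}$ one has $h_{\a,ij;i}=h_{\a,ii;j}$, and the parallel mean curvature hypothesis $\n^N H\equiv 0$ forces $\sum_i h_{\a,ii;j}=0$. This is the step where parallel mean curvature is essential.

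The second contribution is $-\sum_i h_{\a,ij}\lan e_i\nu_{\a j},Q_0\ran$. To expand $e_i\nu_{\a j}$ I differentiate the wedge product slot by slot, using $\ol\n_{e_i}e_j=h_{\be,ij}\nu_\be$ (Gauss, at $p_0$) in the $\a$-th slot and $\ol\n_{e_i}\nu_\g=-h_{\g,ik}e_k$ (Weingarten, at $p_0$) in the other slots. The Gauss piece produces $h_{\be,ij}\nu_1\w\cdots\w\nu_\be\w\cdots\w\nu_m$, which vanishes unless $\be=\a$ (repeated factor), so it collapses to $h_{\a,ij}(\nu_1\w\cdots\w\nu_m)$; pairing with $Q_0$ and summing gives $-|B|^2w$. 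The Weingarten pieces produce $-h_{\g,ik}\nu_{\a j,\g k}$ for each $\g\ne\a$, yielding $\sum_i\sum_{\a\ne\g,\,j,k}h_{\a,ij}h_{\g,ik}\lan\nu_{\a j,\g k},Q_0\ran$; terms with $j=k$ vanish automatically because $e_j$ would appear twice in $\nu_{\a j,\g k}$, so the restriction $j\ne k$ is free. Adding the two contributions gives \eqref{La}. The principal bookkeeping obstacle is keeping track of which wedge-product slots receive which replacement, and recognizing which terms vanish by repetition of tangent or normal factors; the actual analytic content is limited to the Codazzi identity together with the parallel mean curvature hypothesis.
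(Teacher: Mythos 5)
Your proof is correct. The paper does not actually prove this proposition; it cites it from Fischer-Colbrie \cite{fc} and Xin \cite{x1}, and your computation reproduces the standard derivation found there. The adapted-frame argument at a fixed point $p_0$, the slot-by-slot differentiation of $\nu_1\w\cdots\w\nu_m$ via the Gauss and Weingarten formulas, the cancellation of the normal-connection terms by skew-symmetry of $\om_{\a\be}$ together with repeated-factor vanishing, and the use of the Codazzi relation (total symmetry of $\n B$ in flat ambient space) plus $\n^N H\equiv0$ to kill the $\sum_i h_{\a,ij;i}$ term are all exactly the ingredients of the cited computation. The observation that terms with $j=k$ vanish automatically in $\nu_{\a j,\be k}$ (repeated tangent factor), so the restriction $j\ne k$ is free, is also correct and is worth noting since it explains why the formula in \eqref{La} can be written with that restriction without loss. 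The only cosmetic quibble is the shorthand $\lan e_i\nu_{\a j},Q_0\ran$ for $\lan\ol\n_{e_i}\nu_{\a j},Q_0\ran$, which is clear from context.
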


Let $p\in M$ satisfying $w(p)=w(N_p M,Q_0)>0$.
Denote by
$$\pi/2>\th_1\geq \cdots\geq \th_r>\th_{r+1}=\cdots=\th_m=0$$
the Jordan angles between $N_p M$ and $Q_0$.
 By Lemma \ref{basis3}, one can find
an oriented orthonormal basis $\{\ep_1,\cdots,\ep_m\}$ of
$Q_0$, an oriented orthonormal basis $\{\nu_1,\cdots,\nu_m\}$ of $N_p M$ and
an orthonormal basis $\{e_1,\cdots,e_n\}$ of $T_p M$, such that
$\mc{P}_0\nu_\a=\cos\th_\a \ep_\a$ and $\mc{P}_0 e_i=-\sin\th_i \ep_i$.
Especially
\begin{equation}
e_\a=\Phi_{\th_\a}(\nu_\a)\qquad \forall 1\leq \a\leq r.
\end{equation}

Putting
\begin{equation}\label{la}
\la_\a:=\tan\th_\a,
\end{equation}
then
\begin{equation}\label{w1}\aligned
\lan \nu_{\a j},Q_0\ran&=\lan \nu_1\w\cdots\w e_j\w\cdots\w \nu_m,\ep_1\w \cdots\w \ep_m\ran\\
&=\lan \mc{P}_0 \nu_1\w \cdots\w \mc{P}_0 e_j\w \cdots\w \mc{P}_0\nu_m,\ep_1\w \cdots\w \ep_m\ran\\
&=\lan \cos\th_1\ep_1\w \cdots\w (-\sin\th_j \ep_j)\w\cdots\w \cos\th_m\ep_m,\ep_1\w \cdots\w \ep_m\ran\\
&=-\de_{\a j}\tan\th_\a\big(\prod_{\g} \cos\th_\g)\\
&=-\de_{\a j}\la_\a w(p)
\endaligned
\end{equation}
and
\begin{equation}\label{w2}\aligned
&\lan \nu_{\a j,\be k},Q_0\ran\\
=&\lan \nu_1\w\cdots\w e_j\w\cdots\w e_k\w\cdots\w \nu_m,\ep_1\w\cdots\w \ep_m\ran\\
=&\lan \mc{P}_0 \nu_1\w\cdots \w \mc{P}_0 e_j\w\cdots\w \mc{P}_0 e_k\w\cdots\w \mc{P}_0 \nu_m,\ep_1\w \cdots\w \ep_m\ran\\
=&\lan \cos\th_1\ep_1\w \cdots\w (-\sin\th_j\ep_j)\w\cdots\w (-\sin\th_k \ep_k)\w\cdots\w \cos\th_m\ep_m,\ep_1\w\cdots\w \ep_m\ran\\
=&(\de_{\a j}\de_{\be k}-\de_{\a k}\de_{\be j})\tan\th_\a\tan\th_\be\big(\prod_{\g}\cos\th_\g)\\
=&(\de_{\a j}\de_{\be k}-\de_{\a
k}\de_{\be j})\la_\a\la_\be w(p).
\endaligned
\end{equation}
Substituting (\ref{w1}) and (\ref{w2}) into (\ref{dw}) and
(\ref{La}), respectively, we obtain
\begin{equation}
w^{-1}\n_{e_i}w=\sum_\a \la_\a h_{\a,i\a},
\end{equation}
\begin{equation}
w^{-1}\De w=-|B|^2+\sum_i\sum_{\a\neq
\be}\la_\a\la_\be(h_{\a,i\a}h_{\be,i\be}-h_{\a,i\be}h_{\be,i\a}).
\end{equation}

Let
\begin{equation}\label{v}
v:=w^{-1},
\end{equation}
then
\begin{equation}
\aligned
v^{-1}\De v&=w\big(-w^{-2}\De w+2w^{-3}|\n w|^2\big)\\
&=|B|^2+2\sum_{i,\a}\la_\a^2 h_{\a,i \a}^2+\sum_i\sum_{\a\neq
\be}\la_\a\la_\be(h_{\a,i\a}h_{\be,i\be}+h_{\a,i\be}h_{\be,i\a})
\endaligned
\end{equation}

Observing that $\la_\a\la_\be=0$ whenever $\a>r$ or $\be>r$, we group the terms of (\ref{La2}) according to the different types
of the indices of the coefficients of the second fundamental form as
follows.
\begin{equation}\label{La2}\aligned
v^{-1}\De v=&\sum_\a\sum_{i,j>r}h_{\a,ij}^2+\sum_{i>r}I_i+\sum_{i>r,1\leq \a<\be\leq r}II_{i\a\be}\\
            &+\sum_{1\leq \a<\be< \g\leq r}III_{\a\be\g}+\sum_{1\leq \a\leq r}IV_\a
            \endaligned
\end{equation}
where
\begin{equation}\label{I}
I_i=\sum_{1\leq \a\leq r} (2+2\la_\a^2)h_{\a,i\a}^2+\sum_{1\leq
\a,\be\leq r,\a\neq \be}\la_\a\la_\be h_{\a,i\a}h_{\be,i\be},
\end{equation}
\begin{equation}\label{II}
II_{i\a\be}=2h_{\a,i\be}^2+2h_{\be,i\a}^2+2\la_\a\la_\be
h_{\a,i\be}h_{\be,i\a},
\end{equation}
\begin{equation}\aligned
III_{\a\be\g}=&2h_{\a,\be\g}^2+2h_{\be,\g\a}^2+2h_{\g,\a\be}^2\\
&+2\la_\a\la_\be h_{\a,\be\g}h_{\be,\g\a}+2\la_\be\la_\g
h_{\be,\g\a}h_{\g,\a\be}+2\la_\g\la_\a h_{\g,\a\be}h_{\a,\be\g}
\endaligned
\end{equation}
and
\begin{equation}\aligned
IV_\a=&(1+2\la_\a^2)h_{\a,\a\a}^2+\sum_{1\leq \be\leq r, \be\neq \a}\big(h_{\a,\be\be}^2+(2+2\la_\be^2)h_{\be,\a\be}^2\big)\\
&+\sum_{1\leq \be,\g\leq r,\be\neq \g}\la_\be \la_\g
h_{\be,\a\be}h_{\g,\a\g}+2\sum_{1\leq \be\leq r,\be\neq
\a}\la_\a\la_\be h_{\a,\be\be}h_{\be,\a\be}.
\endaligned
\end{equation}
Note that the first term vanishes whenever $n=r$, the second
term vanishes whenever $n=r$ or $r\leq 1$ and the third term
vanishes whenever $r\leq 2$.

It is easily seen that
\begin{equation}
I_i=\big(\sum_{1\leq\a\leq r} \la_\a h_{\a,i\a}\big)^2+\sum_{1\leq
\a\leq r} (2+\la_\a^2)h_{\a,i\a}^2\geq 2\sum_{1\leq \a\leq
r}h_{\a,i\a}^2.
\end{equation}
Hence $I_i=0$ if and only if $h_{\a,i\a}=0$ for any $1\leq \a\leq
r$.

\bigskip
\begin{lem}\label{l2}
If $v(p)\leq 3$, then $II_{i\a\be}\geq 0$ for any $i>r$, $1\leq \a<\be\leq r$, and $\sum_{i>r,1\leq \a<\be\leq r}II_{i\a\be}=0$ if
and only if one of the following two cases occurs: (a)
$h_{\a,i\be}=h_{\be,i\a}=0$ for every $i,\a,\be$; (b) $r=2$, $\la_1=\la_2=\sqrt{2}$ and
$h_{1,i2}=-h_{2,i1}$ for each $i$.

\end{lem}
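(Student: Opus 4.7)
The plan is to view $II_{i\a\be}=2h_{\a,i\be}^2+2h_{\be,i\a}^2+2\la_\a\la_\be h_{\a,i\be}h_{\be,i\a}$ as a quadratic form in the two variables $(h_{\a,i\be},h_{\be,i\a})$. Its symmetric matrix is $\bigl(\begin{smallmatrix}2 & \la_\a\la_\be\\ \la_\a\la_\be & 2\end{smallmatrix}\bigr)$, which is positive semidefinite precisely when $\la_\a\la_\be\leq 2$, and positive definite when $\la_\a\la_\be<2$. So the whole lemma reduces to proving the inequality $\la_\a\la_\be\leq 2$ under the hypothesis $v(p)\leq 3$, and then unpacking the equality case.

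First I would translate $v\leq 3$ into a constraint on Jordan angles. Since $v=w^{-1}=\prod_{\g=1}^{r}\sec\th_\g$ and each factor is $\geq 1$, for any fixed pair $\a<\be$ in $\{1,\dots,r\}$ we get $\sec\th_\a\sec\th_\be\leq v\leq 3$. Set $x=\sec\th_\a$, $y=\sec\th_\be$, so $xy\leq 3$. Then
\begin{equation*}
\la_\a^2\la_\be^2=(x^2-1)(y^2-1)=x^2y^2-(x^2+y^2)+1\leq x^2y^2-2xy+1=(xy-1)^2\leq 4,
\end{equation*}
using the AM--GM inequality $x^2+y^2\geq 2xy$ and $xy\leq 3$. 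Hence $\la_\a\la_\be\leq 2$, which gives $II_{i\a\be}\geq 0$.

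For the equality analysis, observe that $\sum_{i>r,\,\a<\be} II_{i\a\be}=0$ forces every individual $II_{i\a\be}=0$, and I split into two cases according to whether $\la_\a\la_\be<2$ strictly for all pairs or not. If $\la_\a\la_\be<2$ for every admissible pair, the quadratic form above is positive definite, so $II_{i\a\be}=0$ forces $h_{\a,i\be}=h_{\be,i\a}=0$ for all $i>r$ and all $1\leq \a<\be\leq r$, giving case (a). Otherwise $\la_\a\la_\be=2$ for some pair $\a<\be$; tracing the equality chain above, $(xy-1)^2=4$ combined with $xy\leq 3$ forces $xy=3$, and $x^2+y^2=2xy$ forces $x=y$, so $\sec\th_\a=\sec\th_\be=\sqrt{3}$, i.e. $\la_\a=\la_\be=\sqrt{2}$. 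But $\sec\th_\a\sec\th_\be=3=v$ together with $\prod_\g \sec\th_\g=v$ and $\sec\th_\g\geq 1$ forces $\sec\th_\g=1$ for all $\g\notin\{\a,\be\}$, i.e.\ $\th_\g=0$ for such $\g$; by the definition of $r$ this means $r=2$ and $(\a,\be)=(1,2)$. In this situation $II_{i12}=2(h_{1,i2}+h_{2,i1})^2$, which vanishes iff $h_{1,i2}=-h_{2,i1}$ for each $i>r$; this is case (b).

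The only part that is not mechanical is the algebraic inequality $(x^2-1)(y^2-1)\leq 4$ derived from $xy\leq 3$, together with pinning down when each of the two inequalities used is an equality, since this is what identifies the exact geometric configuration in case (b); the rest is completing the square and linear algebra of a $2\times 2$ quadratic form.
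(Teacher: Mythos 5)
Your proposal is correct and follows essentially the same route as the paper's: prove $\la_\a\la_\be\leq 2$ from $v\leq 3$ by an AM--GM inequality, observe that $II_{i\a\be}$ is then a positive semidefinite quadratic form in $(h_{\a,i\be},h_{\be,i\a})$ (the paper writes this as the completed square $\la_\a\la_\be(h_{\a,i\be}+h_{\be,i\a})^2+(2-\la_\a\la_\be)(h_{\a,i\be}^2+h_{\be,i\a}^2)$), and trace the equality chain to pin down the degenerate case. The only superficial variation is that you apply AM--GM to $\sec\th_\a,\sec\th_\be$ to get $(x^2-1)(y^2-1)\leq (xy-1)^2\leq 4$, whereas the paper applies it to $\la_1,\la_2=\tan\th_1,\tan\th_2$, namely $(1+\la_1^2)(1+\la_2^2)\geq(1+\la_1\la_2)^2$; both yield the same conclusion with the same equality conditions.
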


\begin{proof}
In conjunction with (\ref{v}), (\ref{w}) and (\ref{la}) we have
$$\aligned
v^2&=w^{-2}=\prod_\a \sec^2 \th_\a=\prod_{1\leq \a\leq r}(1+\tan^2 \th_\a)\\
&=\prod_{1\leq \a\leq r}(1+\la_\a^2)\geq (1+\la_1^2)(1+\la_2^2)\\
&=1+\la_1^2\la_2^2+\la_1^2+\la_2^2\geq 1+\la_1^2\la_2^2+2\la_1\la_2\\
&=(1+\la_1\la_2)^2\geq (1+\la_\a\la_\be)^2.
\endaligned$$
Thus $v\leq 3$ implies $\la_\a\la_\be\leq 2$, and equality holds
if and only if $r=2$ and $\la_1=\la_2=\sqrt{2}$.

By (\ref{II}),
\begin{equation}
II_{i\a\be}=\la_\a\la_\be
(h_{\a,i\be}+h_{\be,i\a})^2+(2-\la_\a\la_\be)(h_{\a,i\be}^2+h_{\be,i\a}^2)\geq
0.
\end{equation}
Equality holds whenever one and only one of the two cases occurs: (i)
$h_{\a,i\be}=h_{\be,i\a}=0$; (ii) $\la_\a\la_\be=2$ and
$h_{\a,i\be}=-h_{\be,i\a}\neq 0$. Thereby the conclusion follows.

\end{proof}

\bigskip

\begin{lem}\label{l3}
If $v(p)\leq 3$, then $III_{\a\be\g}\geq 0$ for any $1\leq \a<\be<\g\leq r$ and the equality holds
if and only if $h_{\a,\be\g}=h_{\be,\g\a}=h_{\g,\a\be}=0$.
\end{lem}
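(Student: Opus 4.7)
The plan is to recognize $III_{\a\be\g}$ as a quadratic form whose matrix is positive definite. Setting $x:=h_{\a,\be\g}$, $y:=h_{\be,\g\a}$, $z:=h_{\g,\a\be}$ and writing $A:=\la_\a$, $B:=\la_\be$, $C:=\la_\g$, one has
\[
III_{\a\be\g} \;=\; 2\,(x,y,z)\,M\,(x,y,z)^T, \qquad
M = \begin{pmatrix} 1 & AB/2 & CA/2 \\ AB/2 & 1 & BC/2 \\ CA/2 & BC/2 & 1 \end{pmatrix},
\]
and the lemma reduces to the positive definiteness of $M$, which I would verify using Sylvester's criterion.

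The leading $1\times 1$ minor is $1>0$, while the leading $2\times 2$ minor equals $1-A^2B^2/4$ and is strictly positive iff $AB<2$. This follows by sharpening the Cauchy--Schwarz chain used in Lemma \ref{l2}: since $\a<\be<\g\leq r$ forces $r\geq 3$ with $C=\la_\g>0$, the estimate
\[
v^2 \;\geq\; \prod_{i=1}^r (1+\la_i^2) \;\geq\; (1+A^2)(1+B^2)(1+C^2) \;>\; (1+A^2)(1+B^2) \;\geq\; (1+AB)^2
\]
is strict, yielding $AB<v-1\leq 2$.

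The core of the proof is the strict inequality $\det M>0$, i.e.\
\[
A^2B^2+B^2C^2+C^2A^2 - A^2B^2C^2 \;<\; 4.
\]
Set $p:=A^2$, $q:=B^2$, $s:=C^2$ and $G(p,q,s):=pq+qs+sp-pqs$. I would analyze $G$ on the compact region $\{p,q,s\geq 0,\ (1+p)(1+q)(1+s)\leq 9\}$. The unconstrained critical equations $q+s=qs$, $p+s=ps$, $p+q=pq$ force $p=q=s=2$, which lies outside the region since $(1+2)^3=27>9$. On the constraint surface $(1+p)(1+q)(1+s)=9$, pairwise Lagrange comparison yields $(q-p)(1-2s)=0$ and its cyclic variants; up to permutation, the only admissible critical points with $p,q,s>0$ are the symmetric one $p=q=s=9^{1/3}-1$ (where $G=p^2(3-p)<4$) and the asymmetric one $\{p,q,s\}=\{1/2,1/2,3\}$ (where $G=5/2$). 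Remaining candidates lie on a coordinate boundary, say $s=0$, where $G=pq$ subject to $(1+p)(1+q)\leq 9$; the AM--GM bound $(1+p)(1+q)\geq(1+\sqrt{pq})^2$ then gives $pq\leq 4$, with equality only at $p=q=2$. Hence the supremum $4$ is attained only when one of the variables vanishes, and the hypothesis $A,B,C>0$ forces $G<4$ strictly.

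The main obstacle will be executing the Lagrange multiplier step cleanly and ensuring no spurious critical points are missed; once $\det M>0$ is established, Sylvester's criterion makes $M$ positive definite, so $III_{\a\be\g}\geq 0$ with equality iff $h_{\a,\be\g}=h_{\be,\g\a}=h_{\g,\a\be}=0$.
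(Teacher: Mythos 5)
Your approach is correct and takes a genuinely different route from the paper. The paper decomposes $III_{\a\be\g}$ as $(ax+by+cz)^2+(2-a^2)x^2+(2-b^2)y^2+(2-c^2)z^2$, disposes of the case $a^2\leq 2$ immediately, and for $a^2>2$ applies Cauchy--Schwarz to the $(y,z)$-block to reduce to a $2\times 2$ form, whose positive definiteness is equivalent to $\frac{1}{2-a^2}+\frac{1}{2-b^2}+\frac{1}{2-c^2}<1$; this scalar inequality is then proved by a hands-on boundary/monotonicity argument on the compact sets $\Om_\ep$. You instead read $III_{\a\be\g}$ directly as $2\,\xi^T M\xi$ and invoke Sylvester's criterion on the full $3\times 3$ matrix $M$, so there is no case split on the size of $a^2$. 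The two routes are equivalent at the core: clearing denominators in the paper's inequality (with careful attention to the sign of $2-a^2$ when $a^2>2$) gives exactly your $\Si_2-\Si_3=pq+qs+sp-pqs<4$, i.e. $\det M>0$, and the paper's $2\times 2$ Cauchy--Schwarz reduction plays the role of your second leading minor. What your version buys is a uniform, symmetric statement and a cleaner reduction to a symmetric-function inequality; what the paper's version buys is that the final scalar optimization ($f|_\Om<1$) is simpler to verify by monotonicity than your Lagrange analysis of $G$ with its several branches and boundary strata.

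One local slip: the pairwise Lagrange comparison does not give $(q-p)(1-2s)=0$. Subtracting the first two stationarity conditions yields $(q-p)\bigl[(1-s)-\la(1+s)\bigr]=0$ (and cyclic), so that either two of the variables coincide or $\la=\frac{1-p}{1+p}=\frac{1-q}{1+q}=\frac{1-s}{1+s}$, forcing $p=q=s$. This still leads exactly to the critical points you list, $p=q=s=9^{1/3}-1$ and (up to permutation) $(3,\tfrac12,\tfrac12)$, with $G$-values $\approx 2.24$ and $\tfrac52$, both $<4$; together with your boundary check $G=pq\leq 4$ on $s=0$ (equality only at $(2,2,0)$), this closes the argument. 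So the proposal is sound; only the intermediate algebraic form of the Lagrange relation needs correcting, and of course the Lagrange step needs to be carried out in full rather than sketched.
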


\begin{proof}
Let
\begin{equation}
x:=h_{\a,\be\g},\ y:=h_{\be,\g\a},\ z:=h_{\g,\a\be}
\end{equation}
and
\begin{equation}
a:=\la_\a,\ b:=\la_\be,\ c:=\la_\g.
\end{equation}
Then it is sufficient to show that the quadratic form
\begin{equation}
III_{\a\be\g}=2x^2+2y^2+2z^2+2abxy+2bcyz+2cazx
\end{equation}
is positive definite whenever $a\geq b\geq c>0$ and
\begin{equation}\label{abc}\aligned
(1+a^2)(1+b^2)(1+c^2)&=(1+\la_\a^2)(1+\la_\be^2)(1+\la_\g^2)\\
&\leq \prod_{1\leq \de\leq r}(1+\la_\de^2)=v(p)^2\leq 9.
\endaligned
\end{equation}

It is easily-seen that
\begin{equation}
III_{\a\be\g}=(ax+by+cz)^2+(2-a^2)x^2+(2-b^2)y^2+(2-c^2)z^2.
\end{equation}

If $a^2\leq 2$, then (\ref{abc}) implies $2>b^2\geq c^2>0$. Hence
$III_{\a\be\g}\geq 0$ and equality holds if and only if
$ax+by+cz=y=z=0$, which is equivalent to $x=y=z=0$.

If $a^2>2$, again using (\ref{abc}) gives $2>b^2\geq c^2>0$. Putting
$s:=by+cz$, then by the Cauchy-Schwarz inequality,
\begin{equation}
\aligned
s^2&=(by+cz)^2\\
   &=\Big(\f{b}{\sqrt{2-b^2}}\sqrt{2-b^2}y+\f{c}{\sqrt{2-c^2}}\sqrt{2-c^2}z\Big)^2\\
   &\leq \Big(\f{b^2}{2-b^2}+\f{c^2}{2-c^2}\Big)\Big[(2-b^2)y^2+(2-c^2)z^2\Big]
   \endaligned
\end{equation}
and equality holds if and only if
$\big(\f{b}{\sqrt{2-b^2}},\f{c}{\sqrt{2-c^2}}\big)$ and
$(\sqrt{2-b^2}y,\sqrt{2-c^2}z)$ is linear independent, i.e.
\begin{equation}\label{yz}
\f{2-b^2}{b}y=\f{2-c^2}{c}z.
\end{equation}
Hence
\begin{equation}\aligned\label{III}
III_{\a\be\g}&=(ax+s)^2+(2-a^2)x^2+(2-b^2)y^2+(2-c^2)z^2\\
&\geq (ax+s)^2+(2-a^2)x^2+\Big(\f{b^2}{2-b^2}+\f{c^2}{2-c^2}\Big)^{-1}s^2\\
&=2x^2+2axs+\Big[1+\Big(\f{b^2}{2-b^2}+\f{c^2}{2-c^2}\Big)^{-1}\Big]s^2.
\endaligned
\end{equation}

It is well-known that the quadratic form $\sum_{1\leq i,j\leq
2}a_{ij}u_{ij}$ is positive definite if and only if $a_{11}>0$ and
$\det(a_{ij})>0$. Therefore, to show that the right hand side of
(\ref{III}) is positive definite, it is sufficient to prove
$$2\Big[1+\Big(\f{b^2}{2-b^2}+\f{c^2}{2-c^2}\Big)^{-1}\Big]-a^2>0,$$
which is equivalent to
\begin{equation}
\f{1}{2-a^2}+\f{1}{2-b^2}+\f{1}{2-c^2}<1.
\end{equation}

Denoting $u:=1+a^2, v:=1+b^2, w:=1+c^2$,
$$\Om=\{(u,v,w)\in \R^3: u>3>v\geq w>1,uvw\leq 9\}$$
and
$$f(u,v,w)=\f{1}{3-u}+\f{1}{3-v}+\f{1}{3-w}=\f{1}{2-a^2}+\f{1}{2-b^2}+\f{1}{2-c^2},$$
then it suffices to show $f|_\Om<1$. For a sufficiently small
positive constant $\ep$, let
$$\Om_{\ep}=\{(u,v,w)\in \R^3:u\geq 3+\ep,3-\ep\geq v,w\geq 1+\ep,uvw\leq 9\},$$
then $\Om_{\ep}$ is compact and there exists $(u_0,v_0,w_0)\in
\Om_{\ep}$, such that
\begin{equation}\label{max}
f(u_0,v_0,w_0)=\max_{\Om_{\ep}}f.
\end{equation}
Fix $v_0$, then (\ref{max}) implies for any $(u,v_0,w)\in
\Om_{\ep}$ such that $uw=u_0 w_0$,
$$f_{v_0}(u,w):=\f{1}{3-u}+\f{1}{3-v_0}+\f{1}{3-w}\leq \f{1}{3-u_0}+\f{1}{3-v_0}+\f{1}{3-w_0}.$$
Differentiating both sides of $uw=u_0w_0$ yields
$\f{du}{u}+\f{dw}{w}=0$. Hence
$$\aligned
df_{v_0}&=\f{du}{(3-u)^2}+\f{dw}{(3-w)^2}\\
&=\Big[-\f{u}{(3-u)^2}+\f{w}{(3-w)^2}\Big]\f{dw}{w}\\
&=\f{(w-u)(9-uw)}{(3-u)^2(3-w)^2}\f{dw}{w}
\endaligned$$
which means that $f_{v_0}(u,w)$ is strictly decreasing in $w$. Therefore
$w_0=1+\ep$. Similarly one can derive $v_0=1+\ep$. Therefore
$u_0=9(1+\ep)^{-2}$ and
$$\aligned
\max_{\Om_\ep}f&=f(u_0,v_0,w_0)=\f{1}{3-9(1+\ep)^{-2}}+\f{1}{3-(1+\ep)}+\f{1}{3-(1+\ep)}\\
&<\f{1}{3-9}+\f{1}{3-1}+\f{1}{3-1}=\f{5}{6}<1.
\endaligned$$
Then $f|_\Om<1$ follows from $\Om\subset \bigcup_{\ep}\Om_\ep$.

Thus $III_{\a\be\g}\geq 0$ and  equality holds if and only if
$x=0$, $s=by+cz=0$ and (\ref{yz}) holds true, which imply $x=y=z=0$
and the conclusion follows.

\end{proof}

In \cite{j-x-y2}, we obtained an estimate for the fourth term as
follows.

\begin{lem}\label{IV}\cite{j-x-y2}
 There exists a positive constant $\ep_0$ with the following property. If $v(p)\leq 3$, then
$$IV_\a\geq \ep_0\big(h_{\a,\a\a}^2+\sum_{1\leq \be\leq r,\be\neq \a}(h_{\a,\be\be}^2+2h_{\be,\a\be}^2)\big).$$
\end{lem}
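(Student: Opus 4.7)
\noindent\textit{Proof proposal.} The plan is to regard $IV_\a$ as a quadratic form in the $(2r-1)$ variables $x := h_{\a,\a\a}$, $a_\be := h_{\a,\be\be}$ and $b_\be := h_{\be,\a\be}$ for $\be \in \{1,\ldots,r\}\setminus\{\a\}$, and to extract the constant $\ep_0$ by compactness from strict pointwise positive definiteness on the admissible parameter set $K := \{\la \in \R^r_{\geq 0} : \prod_{\de\leq r}(1+\la_\de^2) \leq 9\}$, which encodes the hypothesis $v(p) \leq 3$.

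First I would rewrite $IV_\a$ in a structured form. Setting $b_\a := x$ and $S := \sum_{\be\leq r}\la_\be b_\be$, the identity $\sum_{\be\neq\g}\la_\be\la_\g b_\be b_\g = S^2 - \sum_\be \la_\be^2 b_\be^2$ together with completing the square in each $a_\be$ gives
\[
IV_\a = (1+\la_\a^2)x^2 + S^2 + \sum_{\be\neq\a}\bigl[(a_\be + \la_\a\la_\be b_\be)^2 + (2+\la_\be^2-\la_\a^2\la_\be^2)b_\be^2\bigr].
\]
Each summand is manifestly non-negative except possibly the last, whose coefficient $D_{\be\be} := 2+\la_\be^2-\la_\a^2\la_\be^2$ can become negative when $\la_\a,\la_\be$ are both large; this negativity must be absorbed by the cross term $S^2$.

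The main technical obstacle is to establish strict positive definiteness for every $\la \in K$. A vanishing of $IV_\a$ forces $x = 0$ and $a_\be = -\la_\a\la_\be b_\be$, and the residual form reduces to $b^T A(\la) b$ with $A(\la) := \operatorname{diag}(D_{\be\be})_{\be\neq\a} + \tilde v \tilde v^T$ and $\tilde v = (\la_\be)_{\be\neq\a}$. Its diagonal is $A_{\be\be} = 2+2\la_\be^2-\la_\a^2\la_\be^2 = 2-\la_\be^2(\la_\a^2-2)$, and a Lagrange-multiplier computation under the constraint $(1+\la_\a^2)(1+\la_\be^2) \leq 9$ (implied by the product bound for $r\geq 2$) yields $\sup \la_\be^2(\la_\a^2-2) = 12-6\sqrt 3$, attained at $\la_\a^2 = 3\sqrt 3-1$, $\la_\be^2 = \sqrt 3-1$; hence $A_{\be\be} \geq 6\sqrt 3-10 > 0$ uniformly on $K$. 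Moreover, whenever some $D_{\be\be}$ is actually negative, the product bound forces the remaining $\la_\g$ to be small, so the off-diagonal entries $\la_\be\la_\g$ of $A$ are small; a direct diagonal-dominance argument (or Sylvester's criterion via the matrix determinant lemma $\det(D+\tilde v\tilde v^T) = \det D\,(1+\tilde v^T D^{-1}\tilde v)$) then yields $A(\la) \succ 0$.

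Finally, since the smallest generalized eigenvalue of $IV_\a$ relative to the weight $x^2 + \sum_\be(a_\be^2+2b_\be^2)$ depends continuously on $\la$ and is strictly positive on the compact set $K$, compactness produces a uniform lower bound $\ep_0 := \inf_{\la\in K} \mu_{\min}(\la) > 0$, which delivers the stated estimate.
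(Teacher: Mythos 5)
The paper does not actually prove Lemma~\ref{IV} here; it cites \cite{j-x-y2}, so there is no in-paper proof to compare against. That said, your blind attempt has a genuine gap at its central step.

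Your sum-of-squares rewriting
\begin{equation*}
IV_\a = (1+\la_\a^2)x^2 + S^2 + \sum_{\be\neq\a}\bigl[(a_\be + \la_\a\la_\be b_\be)^2 + D_{\be\be}b_\be^2\bigr], \qquad S = \la_\a x + \textstyle\sum_{\be\neq\a}\la_\be b_\be,
\end{equation*}
is correct, but the inference ``a vanishing of $IV_\a$ forces $x=0$ and $a_\be=-\la_\a\la_\be b_\be$'' is not: since the $D_{\be\be}b_\be^2$ terms may be negative, a null vector of the form need not kill each non-negative square separately. What you actually need is $\min_{x,a}IV_\a$ for fixed $b$. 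Minimizing $(a_\be+\la_\a\la_\be b_\be)^2$ gives zero as you say, but minimizing $(1+\la_\a^2)x^2+S^2=(1+2\la_\a^2)x^2+2\la_\a Tx+T^2$ (with $T:=\sum_{\be\neq\a}\la_\be b_\be$) occurs at $x=-\la_\a T/(1+2\la_\a^2)\neq 0$ in general, with minimum $\tfrac{1+\la_\a^2}{1+2\la_\a^2}T^2$. The correct residual is therefore $b^T\bigl(D+c\,\tilde v\tilde v^T\bigr)b$ with $c=\tfrac{1+\la_\a^2}{1+2\la_\a^2}<1$, not $D+\tilde v\tilde v^T$. Your $A(\la)$ strictly overestimates the true residual matrix, so proving your $A(\la)\succ 0$ does not establish what is required; in particular your diagonal bound $A_{\be\be}\geq 6\sqrt3-10\approx 0.39$ is computed with $c=1$, whereas with the correct $c$ the worst-case diagonal value on $K$ shrinks to roughly $0.04$, so the margin is much thinner than your estimate suggests. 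The concluding diagonal-dominance / Sylvester step is also only sketched, and is exactly where the real work lies once the correct $c$ is inserted. The compactness extraction of $\ep_0$ is fine in principle, but one should note that the discrete parameter $r\leq m$ (and the choice of $\a$) also vary, so the infimum is over a finite union of compacta. In short: right skeleton, but the residual form is miscomputed and the positive-definiteness claim is asserted rather than proved.
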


In conjunction with (\ref{La2}), (\ref{I}), Lemma \ref{l2}-\ref{IV}, $\De v(p)\geq 0$ whenever $v(p)\leq 3$. Moreover if $\De v(p)=0$ and
$|B|^2(p)>0$, then $r=2$, $\la_1=\la_2=\sqrt{2}$, $h_{1,i2}=-h_{2,i1}$ for any $i\geq 3$ and the coeffients of $B$ belonging to the other types  all vanish.
In other words, $\Arg(N_p M,Q_0)\subset \{0,\th_0\}$, $\Arg(T_p M,Q_0^\bot)\subset \{0,\th_0\}$ and the multiplicity of the Jordan angle $\th_0$ equals $2$, where $\th_0:=\arctan \sqrt{2}$.
Let $N_{p,\th_0}M$, $N_{p,0}M$ ($T_{p,\th_0}M$, $T_{p,0}M$) denote angle spaces of $N_p M$
($T_p M$) relative to $Q_0$ ($Q_0^\bot$), then $N_{p,\th_0}M=\text{span}\{\nu_1,\nu_2\}$ and $T_{p,\th_0}M=\text{span}\{e_1,e_2\}$
with $e_\a=\Phi_{\th_0}(\nu_\a)$ for each $\a=1,2$, $N_{p,0}M=\text{span}\{\nu_3,\cdots,\nu_m\}$
and $T_{p,0}M=\text{span}\{e_3,\cdots,e_n\}$.

Denote
\begin{equation}
S_{\mu\nu}(v):=\lan B_{v,\Phi_{\th_0}(\mu)},\nu\ran\qquad \forall \mu,\nu\in N_{p,\th_0}M, v\in T_{p,0}M,
\end{equation}
Then $(\mu,\nu)\mapsto S_{\mu\nu}$ is a $T_{p,0}^*M$-valued bilinear form on $N_{p,\th_0}M$, where $V^*$
denotes the dual space of $V$. In other words, $S\in N_{p,\th_0}^* M\otimes N_{p,\th_0}^* M\otimes T_{p,0}^*M$. For
any $1\leq \a,\be\leq 2$ and $3\leq i\leq n$,
$$S_{\nu_\a \nu_\be}(e_i)=\lan B_{e_i,\Phi_\th(\nu_\a)},\nu_\be\ran=\lan B_{e_i e_\a},\nu_\be\ran=h_{\be,i\a},
$$
which implies $S_{\nu_1\nu_1}=S_{\nu_2\nu_2}=0$ and $S_{\nu_1\nu_2}=-S_{\nu_2\nu_1}\neq 0$,
 i.e. $S$ is antisymmetric.

Moreover,
observing that $h_{\a,ij}=0$ whenever $j\geq 3$ or $\a\geq 3$, $h_{\a,\be\g}=0$ for any $1\leq \a,\be,\g\leq 2$, we
can derive the following result.

\begin{pro}\label{sub-har}
Let $M^n$ be a submanifold in $\R^{n+m}$ with parallel mean curvature, $\g:M\ra \grs{m}{n}$ be the normal Gauss map and $Q_0$ be a fixed point in
$\grs{m}{n}$. Put $v:=w^{-1}(\cdot,Q_0)\circ \g$, then for any $p\in M$, $v(p)\leq 3$ implies $\De v\geq 0$ at $p$. Furthermore, if $\De v(p)=0$,
then one and only one of following 2 cases must occur: (a) $|B|^2(p)=0$. (b) $|B|^2(p)\neq 0$, $\Arg(N_p M,Q_0)\subset \{0,\th_0\}$
and $\Arg(T_p M,Q_0^\bot)\subset \{0,\th_0\}$ with $\th_0:=\arctan \sqrt{2}$, and the multiplicity of $\th_0$ is $2$; let $N_{p,\th_0}M, N_{p,0}M, T_{p,\th_0}M, T_{p,0}M$ be angle spaces and
$\Phi_{\th_0}$ denote the anti-involutive automorphism associated to $\th_0$,
then there exists a nonzero  $T_{p,0}^*M$-valued antisymmetric bilinear form $S$ on $N_{p,\th_0}M$, such that
\begin{equation}\label{sec}
B=\sum_{1\leq \a,\be\leq 2}(S_{\nu_\a\nu_\be}\odot \om_\a)\otimes \nu_\be,
\end{equation}
where $\{\nu_1,\nu_2\}$ is an arbitrary orthonormal basis of $N_{p,\th_0}M$, $\om_\a(v):=\lan v,\Phi_{\th_0}(\nu_\a)\ran$
for every $v\in T_p M$ and $\om\odot \si:=\om\otimes \si+\si\otimes \om$.
\end{pro}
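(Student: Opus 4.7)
The plan is to assemble the decomposition in (\ref{La2}) together with the inequalities already established. The zero‑type term $\sum_\a\sum_{i,j>r}h_{\a,ij}^2$ is manifestly non‑negative; the $I_i$ term is bounded below by $2\sum_\a h_{\a,i\a}^2$; Lemmas \ref{l2} and \ref{l3} give $II_{i\a\be}\geq 0$ and $III_{\a\be\g}\geq 0$ under $v(p)\leq 3$; and Lemma \ref{IV} gives $IV_\a\geq \ep_0\bigl(h_{\a,\a\a}^2+\sum_{\be\neq\a}(h_{\a,\be\be}^2+2h_{\be,\a\be}^2)\bigr)$. Adding these, $v^{-1}\Delta v\geq 0$ at $p$; since $v>0$, this yields $\Delta v(p)\geq 0$, which is the first assertion.

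For the equality case $\Delta v(p)=0$, every summand in (\ref{La2}) must vanish separately. The vanishing of the zero‑type term forces $h_{\a,ij}=0$ for all $i,j>r$; vanishing of $I_i$ forces $h_{\a,i\a}=0$ for $i>r$ and $\a\leq r$; Lemma \ref{IV} then kills $h_{\a,\a\a}$, $h_{\a,\be\be}$, $h_{\be,\a\be}$ for all distinct $\a,\be\leq r$; Lemma \ref{l3} eliminates the fully tangential coefficients $h_{\a,\be\g}$ for distinct $\a,\be,\g\leq r$; and Lemma \ref{l2} leaves only the dichotomy (a) $h_{\a,i\be}=h_{\be,i\a}=0$ for all $i>r$ and all $\a<\be\leq r$ (in which case, combined with the preceding vanishings, $B\equiv 0$ at $p$), or (b) $r=2$, $\la_1=\la_2=\sqrt{2}$, and $h_{1,i2}=-h_{2,i1}$ for each $i\geq 3$. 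So if $|B|^2(p)\neq 0$ we are forced into case (b). From $\la_1=\la_2=\sqrt 2$ and the definitions one reads off $\th_1=\th_2=\th_0:=\arctan\sqrt{2}$, hence $\Arg(N_pM,Q_0)\subset\{0,\th_0\}$ with multiplicity $2$, and by Lemma \ref{Jordan} the same holds for $\Arg(T_pM,Q_0^\bot)$; the angle‑space decompositions $N_{p,\th_0}M=\mathrm{span}\{\nu_1,\nu_2\}$, $T_{p,\th_0}M=\mathrm{span}\{e_1,e_2\}$, $e_\a=\Phi_{\th_0}(\nu_\a)$, and their $0$‑angle complements follow from Lemma \ref{basis3}.

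It remains to recast the surviving second fundamental form as in (\ref{sec}). Define
\[
S_{\mu\nu}(v):=\lan B_{v,\Phi_{\th_0}(\mu)},\nu\ran,\qquad \mu,\nu\in N_{p,\th_0}M,\ v\in T_{p,0}M.
\]
Then $S\in N_{p,\th_0}^*M\otimes N_{p,\th_0}^*M\otimes T_{p,0}^*M$, and evaluating on the basis $\{\nu_1,\nu_2\}$ gives $S_{\nu_\a\nu_\be}(e_i)=h_{\be,i\a}$ for $i\geq 3$. The equality constraint $h_{\be,i\a}=-h_{\a,i\be}$ (together with $h_{1,i1}=h_{2,i2}=0$) is exactly the antisymmetry $S_{\nu_\a\nu_\be}=-S_{\nu_\be\nu_\a}$, and $|B|^2(p)\neq 0$ guarantees $S\not\equiv 0$. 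Plugging $e_\a=\Phi_{\th_0}(\nu_\a)$ into the formula and using $\om_\a(v)=\lan v,\Phi_{\th_0}(\nu_\a)\ran$, the only nonzero components of $B$ are $B_{e_i,e_\a}=h_{\be,i\a}\nu_\be$ with $\a\in\{1,2\}$, $i\geq 3$, which unpacks to exactly (\ref{sec}); symmetry of $B$ in its tangent arguments is compatible with antisymmetry of $S$ precisely because of the "symmetrized tensor" $\om_\a\odot S_{\nu_\a\nu_\be}$. The only nontrivial obstacle is keeping the indexing of Lemma \ref{IV} coherent with the one used in the definition of the $\om_\a$'s; this is purely bookkeeping and requires verifying that independence of the chosen orthonormal basis $\{\nu_1,\nu_2\}$ of $N_{p,\th_0}M$ in (\ref{sec}) follows from the antisymmetry of $S$, which is a short linear‑algebra check.
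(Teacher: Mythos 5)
Your argument is correct and follows the paper's own route essentially step for step: summing the decomposition (\ref{La2}) using the bound on $I_i$ together with Lemmas~\ref{l2}--\ref{IV} for nonnegativity, extracting the equality constraints term by term, and then repackaging the surviving coefficients through the bilinear form $S_{\mu\nu}(v)=\lan B_{v,\Phi_{\th_0}(\mu)},\nu\ran$ exactly as the paper does. The only things worth flagging are cosmetic: you could spell out the case $\a>r$ (where $\la_\a=0$, so the corresponding $h_{\a,ij}^2$ contribute only through $|B|^2$ and hence must also vanish at equality), and the basis-independence of (\ref{sec}) you defer is indeed the same routine check the paper leaves implicit.
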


{\bf Remark.} Let $V$ be an $n$-dimensional real linear space equipped with an inner product $\lan\cdot,\cdot\ran$. A linear subspace
$\mc{Q}\subset S^2(V^*)$ of the quadratic functions on $V$ is said to be \textit{austere} if the odd symmetric functions of the eigenvalues of
any element of $\mc{Q}$ with respect to $\lan\cdot,\cdot\ran$ are all zero, i.e. the nonzero eigenvalues occur in  pairs of opposite signs.
Let $\mc{Q}$ be an austere subspace of $S^2(V^*)$, if there is a nonzero vector $v_0\in V$, such that $\varphi(v_0,v_0)=0$ and $\varphi(v,w)=0$
for any $\varphi\in \mc{Q}$ and $v,w\in v_0^\bot$, and $\dim\mc{Q}\geq 2$, then we say $\mc{Q}$ is \textit{simple} (see \cite{br}).  Let $M^n$ be a submanifold
of $\R^{n+m}$, then for any $p\in M$ and $\nu\in N_p M$, $B_p^\nu:(v,w)\mapsto \lan B_{v,w},\nu\ran$
is a quadratic function on $T_p M$ and hence $\mc{B}_p:=\{B_p^\nu:\nu\in N_p M\}$ is a linear subspace of $S^2(T_p^* M)$. If $\mc{B}_p$
is (simple) austere for every $p\in M$, then $M$ is said to be a (\textit{simple}) \textit{austere submanifold}. The concept of austere submanifolds
was introduced by Harvey-Lawson \cite{h-l} in connection with their foundational work on calibrations.
%Bryant \cite{br} gave
%a structure theorem for simple austere submanifolds, saying that any connected simple austere submanifold is congruent to an open subset of
%a \textit{generalized helicoids}.
Assume $M$ to be a submanifold with parallel mean curvature.
Let $p\in M$, if $v(p)\leq 3$, $\De v(p)=0$ and $|B|^2(p)\neq 0$, then $\Arg(N_p M,Q_0)\subset \{0,\th_0\}$
with $\th_0:=\arctan \sqrt{2}$ and the multiplicity of $\th_0$ is $2$. Now we choose $v_0\in T_{p,0}M$, such that $\lan v_0,v\ran=S_{\nu_1\nu_2}(v)$ for
any $v\in T_{p,0}M$, then (\ref{sec})
implies $\lan B_{v_0v_0},\nu\ran=0$ and $\lan B_{vw},\nu\ran=0$ for any $v,w\in v_0^\bot$, i.e. $\mc{B}_p$ is a simple austere subspace
of $S^2(T^*M)$. Moreover, if $v\leq 3$ and $\De v\equiv 0$ on $M$, then $M$
is a simple austere submanifold, which is congruent to an open subset of a \textit{generalized helicoids}, due to
Bryant's structure theorem for simple austere submanifolds (see \cite{br}).

\bigskip\bigskip

\Section{Bernstein theorems}{Bernstein theorems}\label{be}

We will primarily study a submanifold $M^{n-1}$ in $S^{n+m-1}$, the
standard unit sphere in $\R^{n+m}$. The cone $CM$ over $M$ is the
image of the map $M\times [0,\infty)\rightarrow \R^{n+m}$ defined by
$(\mathbf{x},t)\mapsto t\mb{x}$, namely
\begin{equation}
CM=\{t\mathbf{x}\in \R^{n+m}: t\in [0,\infty),\mb{x}\in M\}.
\end{equation}
Obviously $CM$ has a singularity $t=0$ unless $M$ is a subsphere. To
avoid the singularity we consider the truncated cone $CM_\ep$
defined by
\begin{equation}
CM_\ep=\{t\mb{x}\in \R^{n+m}:t\in (\ep,\infty),\mb{x}\in M\}
\end{equation}
with $\ep>0$.

$M$ and $CM_\ep$ share similar geometric properties. At First,
the comparison of the second fundamental form $B$ of $M$ and $B^c$ of
$CM_\ep$ immediately yields the following result.

\bigskip
\begin{pro}\label{min}(\cite{x0} p.64)
If $CM_\ep$ has parallel mean curvature in $\R^{n+m}$, then $M$ is a
minimal submanifold in $S^{n+m-1}$. Conversely, if $M$ is a minimal
submanifold in $S^{n+m-1}$, then $CM_\ep$ is a minimal submanifold
in $\R^{n+m}$.
\end{pro}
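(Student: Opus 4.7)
The plan is to compare the second fundamental form $B^c$ of $CM_\ep$ in $\R^{n+m}$ with the second fundamental form $B$ of $M$ in $S^{n+m-1}$, and then read off the corresponding identity for the mean curvature vectors. Once that identity is in place, both directions of the proposition become almost immediate.

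First I would set up the geometry at a point $t\mb{x}\in CM_\ep$. The parametrization $F(t,\mb{x})=t\mb{x}$ equips the cone with the warped-product metric $dt^2+t^2 g_M$, and a direct inspection shows that, viewed in ambient coordinates, an orthonormal tangent frame of $CM_\ep$ at $t\mb{x}$ is $\{\mb{x},e_1,\dots,e_{n-1}\}$, where $\{e_1,\dots,e_{n-1}\}$ is any orthonormal tangent frame of $M$ at $\mb{x}$. The normal space $N_{t\mb{x}}(CM_\ep)$ is canonically identified with $N_\mb{x}(M\subset S^{n+m-1})$, because the outward sphere normal $\mb{x}$ is now tangent to the cone; accordingly, a local normal frame $\{\nu_1,\dots,\nu_m\}$ of $M$ in $S^{n+m-1}$, extended to be constant along rays, serves as an orthonormal normal frame for $CM_\ep$.

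Next I would compute $B^c$ using the flat connection $D$ on $\R^{n+m}$. Because radial lines are straight and the frame $\{e_i\}$ is $t$-independent, $D_\mb{x}\mb{x}=0$ and $D_\mb{x}e_i=0$, which give $B^c(\mb{x},\mb{x})=0$ and $B^c(\mb{x},e_i)=0$. For the tangential-tangential entries I would evaluate $D_{e_i}e_j$ at $t\mb{x}$ by taking an $e_i$-directed curve on the cone of the form $s\mapsto t\g(s/t)$, with $\g$ a unit-speed curve in $M$ satisfying $\g'(0)=e_i$; iterating the Gauss formula along the chain $M\subset S^{n+m-1}\subset \R^{n+m}$ then yields
\begin{equation*}
(D_{e_i}e_j)_{t\mb{x}}=\f{1}{t}\Big(\n^M_{e_i}e_j+B_{e_ie_j}-\de_{ij}\mb{x}\Big),
\end{equation*}
where the term $-\de_{ij}\mb{x}$ encodes the second fundamental form of the sphere in $\R^{n+m}$. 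The first summand is tangent to $M$ and the third is radial, both tangent to the cone; only the middle one is normal to $CM_\ep$. Hence
\begin{equation*}
B^c(e_i,e_j)_{t\mb{x}}=\f{1}{t}\,B_{e_ie_j}(\mb{x}),
\end{equation*}
and tracing over the orthonormal frame produces the clean identity
\begin{equation*}
H^c(t\mb{x})=\f{1}{t}\,H(\mb{x})
\end{equation*}
under the identification of normal bundles above.

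Finally I would read off both assertions. If $M$ is minimal in $S^{n+m-1}$, then $H\equiv 0$, so $H^c\equiv 0$ and $CM_\ep$ is minimal in $\R^{n+m}$; this direction is immediate. For the converse the hypothesis is only \emph{parallel} mean curvature on $CM_\ep$, so I would differentiate $H^c$ along the radial direction: since $H(\mb{x})$ is $t$-independent, one has $D_\mb{x}H^c=-\f{1}{t^2}H(\mb{x})$, and this vector already lies in $N(CM_\ep)$, whence
\begin{equation*}
\n^\bot_\mb{x} H^c=-\f{1}{t^2}H(\mb{x})=-\f{1}{t}H^c.
\end{equation*}
Thus $\n^\bot H^c\equiv 0$ forces $H^c\equiv 0$, and hence $H\equiv 0$, so $M$ is minimal in $S^{n+m-1}$. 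The only delicate point in the argument is the bookkeeping that produces the factor $\f{1}{t}$ in the expression for $(D_{e_i}e_j)_{t\mb{x}}$; once this is handled correctly, the rest reduces to substitution.
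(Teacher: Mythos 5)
Your argument is correct and is exactly the comparison of $B$ and $B^c$ that the paper invokes (with only a citation to \cite{x0}, p.~64) to justify Proposition \ref{min}: you identify the normal bundles along rays, derive $B^c(\mb{x},\cdot)=0$ and $B^c(e_i,e_j)_{t\mb{x}}=\frac{1}{t}B_{e_ie_j}(\mb{x})$ via the two-step Gauss formula for $M\subset S^{n+m-1}\subset\R^{n+m}$, trace to get $H^c=\frac{1}{t}H$, and then note that radial covariant-constancy of $H^c$ forces $H^c\equiv 0$. This is the same route as the reference, and all the bookkeeping (the $\frac{1}{t}$ factor, the $-\de_{ij}\mb{x}$ term from the sphere's second fundamental form, and the radial-derivative step handling the weaker ``parallel mean curvature'' hypothesis) is done correctly.
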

\bigskip

There is a natural map from $\R^{n+m}/\{0\}$ to $S^{n+m-1}$ by
\begin{equation}
\psi(\mb{x})=\f{\mb{x}}{|\mb{x}|}.
\end{equation}
Hence for any map $F_1$ from $M$ to an arbitrary Riemannian
manifold, the map
\begin{equation}
F:=F_1\circ \psi
\end{equation}
on $CM_\ep$ is called a \textit{cone-like map} (see \cite{x0} p.66). A direct
calculation shows that $F$ is a harmonic map if and only if $F_1$ is
harmonic (\cite{x0} p.67). Especially, when $F_1$ is a function, $F$
is a harmonic (subharmonic, superharmonic) function if and only if
$F_1$ is harmonic (subharmonic, superharmonic).

For any $p\in M$, $N_p M\subset T_p S^{n+m-1}$ can be viewed as an
$m$-dimensional affine subspace in $\R^{n+m}$. Via parallel
translation in the Euclidean space, one can define the normal Gauss map
$\g: M\ra \grs{m}{n}$
\begin{equation}
p\mapsto N_p M.
\end{equation}

The canonical normal Gauss map on $CM_\ep\subset \R^{n+m}$ is
defined by
\begin{equation}
\g^c: p\in CM_\ep\mapsto N_p (CM_\ep)\in \grs{n}{m}.
\end{equation}
It is easily-seen that $\g^c$ is a cone-like map; more precisely,
\begin{equation}
\g^c=\g\circ \psi.
\end{equation}
Therefore, the Gauss image of $CM_\ep$ coincides with the Gauss image of $M$.
Applying Proposition \ref{sub-har}, we can derive the following spherical Bernstein theorem.

\begin{thm}\label{ber}
Let $M^{n-1}$ be a compact, oriented minimal submanifold in
$S^{n+m-1}$. If there is a fixed oriented $m$-plane $Q_0$, such that
\begin{equation}\label{con}
\lan N,Q_0\ran \geq 1/3
\end{equation}
for all normal $m$-planes $N$ of $M$, then $M$ is a totally geodesic
subsphere of $S^{n+m-1}$.

\end{thm}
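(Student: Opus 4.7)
The plan is to pass to the cone $CM_\ep$, use subharmonicity combined with compactness of $M$ to conclude that the $v$-function is constant, and then argue that the rigid structure forced by the equality case of Proposition \ref{sub-har} cannot occur.

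First I would set up the cone machinery. By Proposition \ref{min}, the truncated cone $CM_\ep$ is an $n$-dimensional minimal, hence parallel-mean-curvature, submanifold of $\R^{n+m}$ with normal Gauss map $\g^c = \g\circ\psi$. Thus $v := w^{-1}(\cdot,Q_0)\circ\g^c$ is a cone-like function on $CM_\ep$, and the hypothesis $\lan N,Q_0\ran\geq 1/3$ forces $v\leq 3$. Proposition \ref{sub-har} yields $\De v\geq 0$ on $CM_\ep$, and the cone-like map theorem transfers this to $M$: $v|_M$ is subharmonic. Since $M$ is compact, the maximum principle forces $v\equiv c$ for some constant $c\in[1,3]$. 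By the cone-like property, $v\equiv c$ on $CM_\ep$ as well, so $\De v\equiv 0$ on the whole cone.

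Next I would invoke the dichotomy in Proposition \ref{sub-har}. At every point $p\in CM_\ep$, either $|B^c|^2(p)=0$ or case (b) holds. If $|B^c|^2\equiv 0$ everywhere, then $CM_\ep$ is totally geodesic in $\R^{n+m}$; being a cone over the origin, it must be an $n$-dimensional linear subspace, and hence $M=CM_\ep\cap S^{n+m-1}$ is an equatorial $(n-1)$-subsphere, as desired. So assume for contradiction that $|B^c|^2>0$ on some nonempty open set $U\subset CM_\ep$. Then on $U$ we are in case (b): $\Arg(N_pCM_\ep,Q_0)\subset\{0,\th_0\}$ with $\th_0=\arctan\sqrt 2$ and constant multiplicities $m_{\th_0}=2$, $m_0=m-2$. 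By Lemma \ref{dis} the normal angle spaces $N_{\th_0}(CM_\ep)$, $N_0(CM_\ep)$ and the tangent angle spaces $T_{\th_0}(CM_\ep)$, $T_0(CM_\ep)$ are smooth subbundles of $NCM_\ep$ and $TCM_\ep$ on $U$, and $\Phi_{\th_0}$ provides a smooth bundle isomorphism between $N_{\th_0}(CM_\ep)$ and $T_{\th_0}(CM_\ep)$.

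The main step, and the hardest one, is to derive a contradiction from the very rigid form
\begin{equation*}
B^c=\sum_{1\leq\a,\be\leq 2}(S_{\nu_\a\nu_\be}\odot\om_\a)\otimes\nu_\be
\end{equation*}
guaranteed by (\ref{sec}). In a local orthonormal frame $\{\nu_1,\nu_2,\nu_3,\dots,\nu_m\}$ of $NCM_\ep$ adapted to the splitting $NCM_\ep=N_{\th_0}\oplus N_0$ and a dual tangent frame $\{e_1,e_2,e_3,\dots,e_n\}$ adapted to $TCM_\ep=T_{\th_0}\oplus T_0$ with $e_\a=\Phi_{\th_0}(\nu_\a)$ for $\a=1,2$, the only nonzero components of $B^c$ are $h_{1,i2}=-h_{2,i1}$ for $i\geq 3$. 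I would plug this structure into the Codazzi identities $h_{\a,ij;k}=h_{\a,kj;i}$ (the ambient space is flat), using that $\la_1=\la_2=\sqrt 2$ is constant and that the normal and tangent connections preserve the angle subbundles. The identities obtained by varying the indices between the $N_{\th_0}$-block and the $N_0$-block, together with the antisymmetry $S_{\nu_1\nu_2}=-S_{\nu_2\nu_1}$, should force the $T_0^*$-valued form $S$ to vanish throughout $U$, contradicting $|B^c|^2>0$.

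The principal obstacle is precisely the algebraic and differential bookkeeping in this last step: one must extract from Codazzi enough relations to rule out the simple austere structure without losing track of the connection corrections that arise because the adapted frames are only locally defined. As noted at the end of Section \ref{pre}, the global version of this obstruction is already captured by Bryant's classification of simple austere submanifolds as generalized helicoids, and one may alternatively appeal to that classification and observe that a generalized helicoid which is simultaneously a cone through the origin must be affine, again contradicting $|B^c|^2>0$. Either route closes the proof and yields that $M$ is a totally geodesic subsphere of $S^{n+m-1}$.
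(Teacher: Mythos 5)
Your reduction is correct and matches the paper: pass to the truncated cone $CM_\ep$, note $v := w^{-1}(\cdot,Q_0)\circ\g^c$ is cone-like with $v\leq 3$, apply Proposition \ref{sub-har} to get subharmonicity, use compactness of $M$ and the maximum principle to force $v$ constant and hence $\De v\equiv 0$, and then try to kill the open set $U=\{|B^c|^2>0\}$. The frame setup, the identification of the sole surviving components $h_{1,i2}=-h_{2,i1}$ ($i\geq 3$), and the invocation of Lemma \ref{dis} to get smooth subbundles are all exactly the paper's steps.

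The gap is in the final step, which you explicitly leave as a sketch, and the sketch as stated does not close. You propose that the Codazzi identities alone, applied to the components of $B^c$, should force $S\equiv 0$ on $U$. That is not what happens. Writing $S_{\nu_1\nu_2}(\cdot)=h\lan\cdot,e_n\ran$ with $e_n$ a unit section of $T_0U$ and $h>0$, Codazzi applied to the vanishing components $B_{e_ie_j}$ ($i,j\leq n-1$) yields only the vanishing of the connection coefficients $\G_{ij}^n=\lan\n_{e_i}e_j,e_n\ran$ for $1\leq i,j\leq n-1$; it gives no equation that constrains $h$ by itself. The decisive additional ingredient, which your sketch omits, is geometric: $e_n$ is everywhere a section of $T_0U\subset Q_0^\perp$, so the ambient derivative $\ol{\n}_{e_i}e_n$ must lie in $Q_0^\perp$ for every $i$. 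Combining this with the Codazzi-derived $\G_{ij}^n=0$ pins down $\ol{\n}_{e_1}e_n=B_{e_1e_n}=h\nu_2$, which would have to lie in $Q_0^\perp$; but $\nu_2\in N_{\th_0}U$ with $\th_0=\arctan\sqrt 2<\pi/2$ has $|\mc{P}_0\nu_2|=\cos\th_0\neq 0$, a contradiction. Without the observation that the auxiliary direction $e_n$ lives in the fixed half-space $Q_0^\perp$, the Codazzi bookkeeping you describe is not enough.

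Your proposed fallback via Bryant's classification is also not a complete substitute. The remark after Proposition \ref{sub-har} records that under $v\leq 3$, $\De v\equiv 0$, $|B|^2\neq 0$ the submanifold is simple austere and locally a generalized helicoid, but the further claim that a generalized helicoid which is a cone through the origin must be affine is asserted without proof or citation. Unless you can establish that classification-theoretic statement, the Codazzi-plus-$Q_0^\perp$ argument above is the route that actually works, and it is the one the paper takes.
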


\begin{proof}
Let $CM_\ep$ be the truncated cone generated by $M$, then by
Proposition \ref{min}, $CM_\ep$ is an $n$-dimensional minimal
submanifold in $\R^{n+m}$. Denote by $\g^c$ the normal Gauss map of
$M$, and
\begin{equation}
v:=w^{-1}(\cdot,Q_0)\circ \g^c.
\end{equation}
Since $\g^c$ is a cone-like map extended by the normal Gauss map
$\g: M\ra \grs{m}{n}$, $v$ is a cone-like function. Hence the
condition (\ref{con}) implies $v\leq 3$ everywhere on $CM_\ep$ and
by applying Proposition \ref{sub-har} we know $v$ is a subharmonic
function on $CM_\ep$. Since $v$ is a cone-like function, $v|_M$ is
also a subharmonic function on $M$. The classical maximum principle
implies $v|_M$ is constant, therefore $\De v\equiv 0$ on $CM_\ep$.

\begin{equation}
U:=\{p\in CM_\ep:|B|^2(p)>0\}
\end{equation}
is an open subset of $CM_\ep$. It suffices to show $U=\emptyset$.

We prove it by a reductio ad absurdum. Assume $U\neq \emptyset$.
Let $\th$ be a normal (tangent) Jordan angle function on $U$,
then Proposition \ref{sub-har} implies $\th(p)=\th_0$ or $0$
for any $p\in U$, where $\th_0:=\arctan \sqrt{2}$. The continuity of $\th$
forces $\th\equiv \th_0$ or $0$. Moreover, since $m_{\th_0}^N=m_{\th_0}^T\equiv 2$,
Lemma \ref{dis} implies $N_{\th_0} U$, $N_0 U$ are smooth subbundles of $NU$, and $T_{\th_0} U$, $T_0 U$ are smooth subbundles
of $TU$.
It is worthy to note that, if $m=2$ ($n=2$), $\nu\in N_0 U$ ($u\in T_0 U$) if and only if $\nu=0$ ($u=0$).
Again applying Proposition \ref{sub-har}, the normal bundle and the tangent bundle have the following decomposition
$$\aligned
NU&=N_{\th_0} U\oplus N_0 U,\\
TU&=T_{\th_0} U\oplus T_0 U
\endaligned$$
and there exists $S\in \G\big( \La^2(N_{\th_0}^* U)\otimes T_0^* U\big)$, which is nonzero everywhere on $U$, such that for any $p_0\in U$,
\begin{equation}\label{S}
B=\sum_{1\leq \a,\be\leq 2}(S_{\nu_\a\nu_\be}\odot \om_\a)\otimes \nu_\be
\end{equation}
holds on a neighborhood of $p_0$, where $\{\nu_1,\nu_2\}$ is a local orthonormal frame field of $N_{\th_0} U$ and
$\om_\a(v):=\lan v,\Phi_{\th_0}(\nu_\a)\ran$. Now we put
$e_\a(p):=\Phi_{\th_0}\big(\nu_\a(p)\big)$ for every $p$ and each $\a=1,2$, then $\{e_1,e_2\}$ is a local orthonormal frame field of $T_{\th_0} U$.
Since $S$ is nonzero everywhere, there are a unit vector field $e_n$ of $T_0 U$, and a positive function $h$, such that
\begin{equation}\label{h}
S_{\nu_1\nu_2}(v)=h\lan v,e_n\ran\qquad \forall v\in T_0 U.
\end{equation}
Now we choose $e_3,\cdots,e_{n-1}$ ($\nu_3,\cdots,\nu_m$) to be unit vector fields of $T_0 U$ ($N_0 U$), such that
$\{e_1,\cdots,e_n\}$ ($\{\nu_1,\cdots,\nu_m\}$) is a local orthonormal tangent (normal) frame field around $p_0$.
Then (\ref{S}) and (\ref{h}) shows
$B_{e_i e_j}=0$ for any $1\leq i,j\leq n-1$,
$B_{e_n e_i}=0$ for every $i\geq 3$, $B_{e_n e_1}=h \nu_2$ and $B_{e_n e_2}=-h\nu_1$.

 Noting that $e_n\in Q_0^\bot$ everywhere, we have
\begin{equation}\label{orthogonal}
\overline{\n}_{e_i}e_n\in Q_0^\bot\qquad \forall 1\leq i\leq n
\end{equation}
with $\overline{\n}$ the Levi-Civita connection on $\R^{n+m}$.

Denote
\begin{equation}
\G_{ij}^n:=\lan \n_{e_i}e_j,e_n\ran\qquad \forall 1\leq i,j\leq n-1.
\end{equation}
Differentiating both sides of $B_{e_i e_j}\equiv 0$ gives
\begin{equation}\label{Ga1}
\aligned
0&=\n_{e_k}B_{e_i e_j}=(\n_{e_k}B)_{e_i e_j}+B_{\n_{e_k}e_i,e_j}+B_{e_i,\n_{e_k}e_j}\\
&=(\n_{e_k}B)_{e_i e_j}+\G_{ki}^n B_{e_n e_j}+\G_{kj}^n B_{e_i e_n}
\endaligned
\end{equation}
Interchanging the position of $i$ and $k$ in the above formula yields
\begin{equation}\label{Ga2}
(\n_{e_i}B)_{e_k e_j}+\G_{ik}^n B_{e_n e_j}+\G_{ij}^n B_{e_k e_n}=0.
\end{equation}
The well-known Codazzi equations tell us $(\n_{e_k}B)_{e_i e_j}=(\n_{e_i}B)_{e_k e_j}$, then combining (\ref{Ga1}) and (\ref{Ga2}) gives
\begin{equation}\label{Gamma}
\G_{ki}^n B_{e_n e_j}+\G_{kj}^n B_{e_i e_n}-\G_{ik}^n B_{e_n e_j}-\G_{ij}^n B_{e_k e_n}=0
\end{equation}
holds for any $1\leq i,j,k\leq n-1$. For any $3\leq i,j\leq n-1$, choosing $k=1$ in (\ref{Gamma}) implies
$0=-\G_{ij}^n B_{e_1 e_n}=-h\G_{ij}^n \nu_2$, hence $\G_{ij}^n=0$. For any $3\leq i\leq n-1$, taking $j=1,k=2$ in (\ref{Gamma}) gives
$$\aligned
0&=\G_{2i}^n B_{e_n e_1}-\G_{i2}^n B_{e_n e_1}-\G_{i1}^n B_{e_2 e_n}\\
&=h(\G_{2i}^n-\G_{i2}^n)\nu_2+h\G_{i1}^n \nu_1
\endaligned$$
and hence $\G_{2i}^n-\G_{i2}^n=\G_{i1}^n=0$. Similarly, choosing $j=2,k=1$ yields $\G_{1i}^n-\G_{i1}^n=\G_{i2}^n=0$.
Therefore $\G_{1i}^n=\G_{2i}^n=\G_{i1}^n=\G_{i2}^n=0$.  Now we put $i=j=1$, $k=2$, then (\ref{Gamma}) tells us
$$\aligned
0&=\G_{21}^n B_{e_n e_1}+\G_{21}^n B_{e_1 e_n}-\G_{12}^n B_{e_n e_1}-\G_{11}^n B_{e_2 e_n}\\
&=h(2\G_{21}^n-\G_{12}^n)\nu_2+h\G_{11}^n \nu_1
\endaligned$$
and hence $2\G_{21}^n-\G_{12}^n=\G_{11}^n=0$. Similarly, putting $i=j=2$ and $k=1$ in (\ref{Gamma}) yields
$2\G_{12}^n-\G_{21}^n=\G_{22}^n=0$. Therefore $\G_{11}^n=\G_{12}^n=\G_{21}^n=\G_{22}^n=0$. In summary
\begin{equation}
\lan \n_{e_i}e_j,e_n\ran=\G_{ij}^n=0\qquad \forall 1\leq i,j\leq n-1.
\end{equation}

Thus
\begin{equation}
\aligned
\overline{\n}_{e_1}e_n&=\sum_{1\leq j\leq n-1}\lan \n_{e_1}e_n,e_j\ran e_j+\lan \n_{e_1}e_n,e_n\ran+B_{e_1 e_n}\\
&=-\sum_{1\leq j\leq n-1}\lan \n_{e_1}e_j,e_n\ran e_j+B_{e_1 e_n}\\
&=h\nu_2
\endaligned
\end{equation}
and then by (\ref{orthogonal}), $\nu_2$ is orthogonal to $Q_0$, i.e. $\mc{P}_0 \nu_2=0$. But on the other hand, $\nu_2\in N_{\th_0} U$
implies $|\mc{P}_0\nu_2|=\cos\th_0$, which is a contradiction.

Therefore $U=\emptyset$ and $CM_\ep$ is totally geodesic in $\R^{n+m}$. Hence $M$ has to be a totally geodesic subsphere.

\end{proof}

With the results of geometric measure theory, we can also prove a Euclidean Bernstein type theorem.
\bigskip

\begin{thm}\label{t3}
Let $f:=(f^1,\cdots,f^m)$ be a smooth $\R^m$-valued function defined everywhere on $\R^n$. Suppose its graph
$M:=\text{graph }f=\{(x,f(x)):x\in \R^m\}$ is a minimal submanifold in $\R^{n+m}$, and
\begin{equation}\label{con2}
\De_f:=\Big[\det\Big(\de_{ij}+\sum_\a \f{\p f^\a}{\p x^i}\f{\p
f^\a}{\p x^j}\Big)\Big]^{\f{1}{2}}\leq 3,
\end{equation}
then $f^1,\cdots,f^m$ has to be affine linear,  representing an affine $n$-plane in $\R^{n+m}$.

\end{thm}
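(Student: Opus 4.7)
The plan is to reduce Theorem \ref{t3} to the spherical statement Theorem \ref{ber} via Fleming's classical blow-down at infinity, using geometric measure theory (Allard's compactness and regularity theorems together with the monotonicity formula) to replace the more elementary estimates available in codimension one. The first step is a direct computation on graphs: choosing $Q_0$ to be the oriented $m$-plane spanned by the last $m$ coordinate directions of $\R^{n+m}$, the formula $|w(P,Q_0)|=\prod_\a\cos\th_\a$ together with the explicit normal frame field on the graph of $f$ gives
\begin{equation}
v\equiv w^{-1}(\cdot,Q_0)\circ\g=\De_f \quad\text{on }M.
\end{equation}
Hence \eqref{con2} is exactly the condition $v\leq 3$, and Proposition \ref{sub-har} yields that $v$ is a bounded subharmonic function on the complete minimal submanifold $M$.

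Next I would blow $M$ down at infinity. For $r>0$ set $M_r:=r^{-1}M$; each $M_r$ is a complete minimal graph with the same Gauss image, so $v\leq 3$ persists, and the area estimate
\begin{equation}
\mc{H}^n\bigl(M\cap B_R(0)\bigr)\leq \int_{B_R^n}\De_f\,dx\leq 3\omega_n R^n
\end{equation}
yields uniform density bounds on compact sets. Allard's compactness theorem produces a subsequence $M_{r_k}\to C$ in the varifold sense with $C$ stationary, and the monotonicity formula forces $C$ to be a minimal cone. The single-sheeted graph structure of the $M_r$ keeps the multiplicity of $C$ equal to one almost everywhere, so its link $\Si:=C\cap S^{n+m-1}$ is a compact minimal submanifold of $S^{n+m-1}$ on its regular set, on which the Gauss image bound $\lan N,Q_0\ran\geq 1/3$ survives by taking pointwise limits of the tangent planes of $M_{r_k}$. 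After an Allard $\ep$-regularity argument handles the singular set of $\Si$ (which has codimension at least two), Theorem \ref{ber} applies and forces $\Si$ to be a totally geodesic subsphere; hence $C$ is a multiplicity-one affine $n$-plane through the origin.

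The density of $M$ at infinity therefore equals $1$. Since $M$ is a smooth $n$-submanifold, its density is everywhere at least $1$, and the monotonicity formula makes it non-decreasing in $r$; so the density of $M$ is identically $1$, and the equality case of monotonicity forces $M$ to be a cone with vertex at every one of its points. This is only possible if $M$ is itself an affine $n$-plane, i.e.\ if $f$ is affine linear. The main obstacle will be the regularity and multiplicity of the blow-down: because $\De_f\leq 3$ only bounds the density ratio by $3$, the link $\Si$ emerges \emph{a priori} as a stationary integral varifold of possibly higher multiplicity, and one must combine the graph structure of the $M_r$ with Allard's $\ep$-regularity theorem to recover enough smoothness to invoke Theorem \ref{ber} in its smooth formulation; the remaining computations --- the identity $v=\De_f$, the passage of the Gauss image bound to the limit, and the final rigidity via monotonicity --- are then standard.
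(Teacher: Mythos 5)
Your proposal is correct and follows the same Fleming-style blow-down strategy as the paper: identify $v=\De_f\le 3$, use subharmonicity of $v$, contract $M$ at infinity to obtain a minimal cone, show its link is a smooth compact minimal submanifold of $S^{n+m-1}$ with Gauss image still in $\lan N,Q_0\ran\ge 1/3$, invoke Theorem \ref{ber} to force the link to be an equatorial sphere, and then deduce that $M$ is an affine plane. The differences from the paper are purely in execution: the paper extracts the blow-down limit via an equicontinuity/Arzel\`a--Ascoli argument on the rescaled graph functions $f_t(x)=t^{-1}f(tx)$ (following Fischer-Colbrie \S 5) and finishes by directly citing Allard's regularity estimate, whereas you phrase the compactness step in varifold language (Allard compactness plus monotonicity) and close with the density-ratio/equality-in-monotonicity rigidity argument; these are two standard formulations of the same mechanism, and the monotonicity argument you give is indeed what makes the ``Allard regularity'' step in the paper work. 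The one place where you should flesh out details to match the paper's claim is the regularity and multiplicity of the link: the paper asserts (citing Fischer-Colbrie) that the cone $C(M,\infty)$ is regular away from the origin, so that $M'=\Si$ is already a smooth compact minimal submanifold to which Theorem \ref{ber} applies; your sketch acknowledges this obstacle but leaves open exactly how the graph structure plus Allard $\ep$-regularity rules out a singular set in $\Si$, which is the technical heart of the reduction and should not be taken for granted.
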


\begin{proof}
As shown in \cite{j-x-y2}, the condition (\ref{con2}) says
$$v:=w^{-1}(\cdot,Q_0)\circ \g\leq 3.$$
Here $\g$ denotes the normal Gauss map of $M$ into $\grs{m}{n}$, and $Q_0:=\ep_{n+1}\w\cdots\w \ep_{n+m}$,
where $\ep_1,\cdots,\ep_{n+m}$ denotes the Cartesian coordinate vectors in $\R^{n+m}$. In other words, the Gauss image
of $M$ is contained in a closed region
$$\ol{\Bbb{V}}:=\{P\in \grs{m}{n}: w(P,Q_0)\geq 1/3\}.$$
Now we consider the tangent cone of $M$ at $\infty$, which is the limit of a one-parameter family of minimal submanifolds
in $\R^{n+m}$; each one is obtained by a contracting procedure. More precisely, let
\begin{equation}
f_t=\f{1}{t}f(tx),\qquad \forall t\in \R^+,
\end{equation}
then it is easy to check that $f_t$ satisfies the minimal surface equations and hence $\{M_t=\text{graph }f_t:t\in \R^+\}$
defines a family of minimal submanifolds in $\R^{n+m}$. Based on (\ref{con2}), one can proceed as in \cite{fc} \S 5 to show that
$\{f_t:t\in \R^+\}$ is an equicontinuous family on any compact subset of $\R^n$. Thus the Arzela-Ascoli Theorem implies that
there exists a subsequence $\{f_{t_i}:i\in \Bbb{Z}^+\}$ such that $\lim_{i\ra \infty}t_i=+\infty$ and
$\lim_{i\ra \infty}f_{t_i}=h$, which is a Lipschitz function. One can
prove that $h$ is a weak solution to the minimal surface equations
and its graph $C(M,\infty):=\{(x,h(x)):x\in \R^n\}$ is a cone (see \cite{l}), which is called the tangent cone of $M$ at
$\infty$. In the framework of geometric measure theory, we can prove
that $C(M,\infty)$ is regular except at $0$, as in \cite{fc}.
Hence the intersection of $C(M,\infty)$ and the unit sphere gives an $(n-1)$-dimensional embedded minimal submanifold in
$S^{n+m-1}$, which is denoted by $M'$. The image of the normal Gauss map of $M'$ is still contained in $\ol{\Bbb{V}}$,
i.e. $\lan N,Q_0\ran\geq 1/3$ for all normal $m$-planes of $M'$ (the proof is the same as in \cite{x} \S 7.3). By Theorem
\ref{ber}, $M'$ has to be a totally geodesic subsphere. Then Allard's regularity estimate \cite{a} implies $f$ is affine linear
and $M$ is an affine $n$-plane.
\end{proof}

\bibliographystyle{amsplain}

\end{document}